\newtheorem{thm}{Theorem}[section]
\newtheorem{prop}[thm]{Proposition}
\newtheorem{lem}[thm]{Lemma}
\newtheorem{question}[thm]{Question} 
\theoremstyle{definition}
\newtheorem{definition}[thm]{Definition}
\newtheorem{example}[thm]{Example}
\newtheorem{lemma}[thm]{Lemma}
\newtheorem{remark}[thm]{Remark}
\numberwithin{equation}{section}
\newcommand{\Z}{\mathbb{Z}}
\newcommand{\R}{\mathbb{R}}
\newcommand{\T}{\mathbb{T}}
\newcommand{\exist}{\exists}
\newcommand{\sub}{\subset}
\newcommand{\rmG}{\mathrm{G}}
\newcommand{\rmd}{\mathrm{d}}
\newcommand{\Span}{\mathrm{Span}}
\newcommand{\vol}{\mathrm{vol}}
\newcommand{\SO}{\mathrm{SO}}
\newcommand{\Sp}{\mathrm{Sp}}
\newcommand{\SU}{\mathrm{SU}}
\newcommand{\abs}[1]{\left| #1 \right|}
\newcommand{\adj}{\mathrm{adj}}
\newcommand{\Iso}{\mathrm{Iso}}
\newcommand{\frt}{\mathfrak{t}}
\newcommand{\friso}{\mathfrak{iso}}
\newcommand{\Sym}{\mathrm{Sym}}
\newcommand{\Ric}{\mathrm{Ric}}
\DeclareMathOperator{\Hol}{Hol}
\DeclareMathOperator{\Null}{Null}
\begin{document}


\title{Closed $\rmG_2$-structures with $\mathbb{T}^3$-symmetry and hypersymplectic structures}


\author{Chengjian Yao, Ziyi Zhou}
\address{Institute of Mathematical Sciences, ShanghaiTech University}
\address{393 Middle Huaxia Rd, Pudong New District, Shanghai, 201210}

\email{yaochj@shanghaitech.edu.cn, zhouzy22023@shanghaitech.edu.cn}


\begin{abstract}We decompose linear $\rmG_2$-structure in canonical ways adapted to 3-dimensional subspaces, in terms of certain natural 1-forms and definite triple of 2-forms, and apply the decompositions to the study of $\rmG_2$-structure with $\T^3$-symmetry. Closed $\rmG_2$-structures $\varphi$ with an effective $\mathbb{T}^3$-symmetry on connected manifolds are roughly classified into two types according the orbits being non-isotropic or isotropic. Type I: if some orbit is non-isotropic, then the action is almost-free and $\varphi$ reduces to a \emph{good hypersymplectic orbifold} with cyclic isotropic groups. Type II: if some orbit is isotropic, then the action is \emph{locally multi-Hamiltonian} for $\varphi$. Moreover, the open and dense subset of principal orbits is foliated by $\mathbb{T}^3$-invariant hypersymplectic manifolds.

    If $\varphi$ is torsion-free, then for Type I, there arises another natural hypersymplectic structure, and a generalized Gibbons-Hawking Ansatz extending \emph{Madsen-Swann Ansatz} is derived. For Type II, $\varphi$ is \emph{locally toric}.

    Assuming moreover completeness and constant orbit volume, exactly three possibilities occur. Type Ia: orbits are purely non-isotropic non-associative, then the hypersymplectic 4-orbifold becomes a flat manifold. Type Ib: orbits are purely associative, then the $\T^3$-action is flat, and the hypersymplectic 4-orbifold becomes a hyperk\"ahler 4-orbifold. Type II: orbits are isotropic, then all orbits are principal, and $\varphi$ is flat.
    
 
\end{abstract}
\maketitle
\noindent {\bfseries{Keywords}}: Closed $\rmG_2$-structure, Hypersymplectic structure, Torsion-free $\rmG_2$-structure, Hyperk\"ahler manifold, $\rmG_2$-Laplacian flow.


\section{Introduction and preliminaries}
The group $\rmG_2$ is one of the exceptional simple Lie groups, and the $\rmG_2$ geometry has become more and more important. The basics of $\rmG_2$ geometry could be found in the excellent survey \cite{Kar} by Karigiannis. We recall very briefly below.
\begin{definition}
    $\rmG_2$ is the subgroup of $\SO(7;\R)$ preserving the $3$-form
    \[
        \varphi_o:=e^{123}-e^1(e^{45}+e^{67})-e^2(e^{46}+e^{75})-e^3(e^{47}+e^{56})\in\Lambda^3(\R^7)^*,
    \]
    where $e^1,\cdots,e^7$ is the standard basis of $(\R^7)^*$ and $e^{ijk}:=e^i\wedge e^j\wedge e^k$.
\end{definition}
\begin{definition}
    Let $M^7$ be a smooth manifold. A \emph{$\rmG_2$-structure} on $M$ is a smooth $3$-form $\varphi$ on $M$ such that for any $p\in M$, there exists a linear isomorphism $T_pM\cong\R^7$ such that $\varphi_p\in\Lambda^3(T^*_pM)$ corresponds to $\varphi_o\in\Lambda^3(\R^7)^*$. The corresponding basis of $T_pM$ is called an \emph{adapted basis}.
\end{definition}

A smooth manifold $M^7$ admits a $\rmG_2$-structure if and only if $M$ is both orientable and spinnable, and a $\rmG_2$-structure $\varphi$ determines a Riemannian metric $g_\varphi$ via the formula
    \begin{align*} 
    g_\varphi(u,v)\text{dvol}_{g_\varphi}=\frac{1}{6}u\lrcorner\varphi\wedge v\lrcorner\varphi\wedge\varphi, \;\; \forall u,v\in T_pM,\forall p\in M.
    \end{align*}

The $\rmG_2$-structure is called \emph{closed} if $\rmd\varphi=0$, and is called \emph{coclosed} if $\rmd^{*_\varphi}\varphi=0$. The structure $\varphi$ is \emph{torsion-free}, i.e. $\nabla\varphi=0$ if and only if $\varphi$ is closed and coclosed where $\nabla$ is the Levi-Civita connection of the metric $g_\varphi$. The pair $(M^7,\varphi)$ of smooth manifold together with a torsion-free $\rmG_2$-structure is called a \emph{$\rmG_2$-manifold}. A key property in the study of $\rmG_2$ geometry is that $\Hol(g_\varphi)\subset\rmG_2$ if and only if $\left(M, \varphi\right)$ is a $\rmG_2$-manifold. If this happens, the Riemannian metric $g_\varphi$ is Ricci-flat. The general existence, uniqueness and moduli space of torsion-free $\rmG_2$-structures remain mysterious, despite much progress \cite{Kar}.\\

The first and main motivation of the current study comes from the curiosity about all possible reduction of closed $\rmG_2$-structure with $\mathbb{T}^3$-symmetry. On the one hand, it is shown by Podest\`a-Raffero \cite{PR} that the identity component of the automorphism group of a closed $\rmG_2$-structure on a compact manifold must be Abelian. On the other hand, the recent study of $\rmG_2$-Laplacian flow \cite{LW1, LW2} and its several dimensional reductions \cite{FR,PS}, in particular the hypersymplectic flow \cite{Fine-Yao, Fine-Yao2} poses the natural question about the relation between closed $\rmG_2$-structure with $\T^3$-symmetry and hypersymplectic structures \cite{Don, Fine-Yao}. Due to the extreme complexity of the analysis about the $\rmG_2$-Laplacian flow, it is tempting to extend the very $\rmG_2$-Laplacian flow coming out of hypersymplectic flow \cite{Fine-Yao} to the general $\T^3$-symmetric situation, where potentially more geometric quantities make it plausible to expect conditional long time existence of the flow. We recall the definition of hypersymplectic structure.

\begin{definition}
    A \emph{hypersymplectic structure} on a $4$-manifold $N$ is a triple $\underline\omega=\left(\omega_1, \omega_2, \omega_3\right)$ of symplectic forms on $N$ such that $\left(\frac{\omega_i\wedge\omega_j}{2\mu}\right)>0$ on $N$ for some volume form $\mu$.
\end{definition}
Similar to how $\rmG_2$-structure gives rise to a Riemannian metric, hypersymplectic structure $\underline\omega$ determines a Riemannian metric via the formula
\begin{align}
\label{formula:definite}
g_{\underline\omega}(u,v)\text{dvol}_{g_{\underline\omega}}
= 
\frac{1}{6}\varepsilon^{ijk}u\lrcorner \omega_i \wedge v\lrcorner \omega_j \wedge \omega_k, \;\; \forall u,v\in T_pN,\forall p\in N.
\end{align}
Actually, that the formula determines a Riemannian metric does not require the \emph{closedness} of $\omega_i$'s, and this determination works whenever the matrix $\left(\frac{\omega_i\wedge\omega_j}{2\mu}\right)$ is pointwise definite, which we call a \emph{definite triple}. The notion of hypersymplectic structure leads to a nice characterization of hyperk\"ahler structure, that is the Riemannian metric $g_{\underline\omega}$ is hyperk\"ahler if and only if the matrix $\left(\frac{\omega_i\wedge\omega_j}{2\text{dvol}_{g_{\underline\omega}}}\right)$ is constant. The relation between a hypersymplectic structure and closed $\rmG_2$-structure is given in Example \ref{example-standard}.\\

The second motivation is to understand $\rmG_2$-manifold with $\T^3$-symmetry, in particular along the line of study of ``toric $\rmG_2$-manifolds'' by Madsen-Swann \cite{Madsen-Swann} using \emph{multi-moment map}. Let us first recall the notion of multi-moment map from \cite{Madsen-Swann}.

\begin{definition}[multi-Hamiltonian/locally multi-Hamiltonian]
    Let $M$ be a manifold, and $\alpha$ be a closed $(k+1)$-form on $M$. Let $\mathrm G$ be an Abelian Lie group action on $M$ preserving $\alpha$. A \emph{multi-moment map} for $\alpha$ is an invariant map $\nu:M\to\Lambda^k\mathfrak g^*$ such that
    \[
    \mathrm d\langle \nu,W\rangle=\xi(W)\lrcorner\alpha, 
    \]
    for all $W\in\Lambda^k\mathfrak g$ where $\xi(W)$ is the unique multivector determined by $W$ via the action. The action is called \emph{multi-Hamiltonian} for $\alpha$ if there is a multi-moment map for $\alpha$. If it only holds on a $\rmG$-invariant open neighborhood near any $\rmG$-orbit, it is called \emph{locally multi-Hamiltonian}.
\end{definition}

Madsen and Swann initiated the study of $\rmG_2$-manifold with multi-Hamiltonian torus actions in \cite{Madsen-Swann}, in particular $\mathbb{T}^3$-action. In order to get a full picture of $\rmG_2$-manifold with $\T^3$-symmetry, we extend the notion of toric $\rmG_2$-manifold to \emph{locally toric} $\rmG_2$-manifold. 
\begin{definition}[Toric/Locally toric $\rmG_2$-manifold \cite{Madsen-Swann}]
    A $\rmG_2$-manifold $(M,\varphi)$ with an effective $\mathbb T^3$-action is called \emph{toric} if it admits multi-moment maps $\nu:M\to\Lambda^2(\mathfrak t^3)^*\simeq\R^3$ for $\varphi$ and $\mu:M\to\Lambda^3(\mathfrak t^3)^*\simeq\R$ for $*\varphi$. If it only holds on a $\mathbb{T}^3$-invariant open neighborhood near any point, it is called \emph{locally toric}.
\end{definition}

The toric condition—even in its local form—is natural from the viewpoint of multi-moment maps, but it imposes a strong additional constraint beyond mere $\T^3$-symmetry. As we shall see in Section \ref{sect:isotropic}, a $\T^3$-invariant closed $\rmG_2$-structure with isotropic orbits is automatically locally toric. However, this is far from being the generic situation: a generic $\T^3$-invariant $\rmG_2$-structure does not possess a multi-moment map for $\varphi$, and therefore falls outside the toric (and even locally toric) framework altogether. One of the motivations for the present article is precisely to understand what structures replace the multi-moment maps when the toric condition is relaxed.

The following examples illustrate the range of possibilities. Example \ref{example-standard} (1) exhibits all three orbit types—associative, isotropic, and neither isotropic nor associative — within a single flat model, and shows that isotropic orbits are locally toric but not globally toric. Example \ref{example-standard} (2) provides a large class of non-flat, closed $\rmG_2$-structure whose orbits are all associative. Together, these examples demonstrate that (locally) toric $\rmG_2$-manifolds occupy a special corner of the landscape of $\T^3$-invariant closed $\rmG_2$-structures, and that a systematic study beyond the toric realm is necessary.
\vspace{0.3cm}

\begin{example}~
\label{example-standard}
\begin{enumerate}
    \item Let $\left(\R^7/\Gamma, \varphi_o\right)$ where $\Gamma\subset \R^7$ be a lattice, and $\Pi\subset \R^7$ be any ``rational'' $3$-subspace, i.e. the subspace spanned (over $\R$) by vectors in $\Gamma$. Then there is an induced free $\T^3$-action action $\rho_\Pi$. More precisely, let $\left\{U_1, U_2, U_3\right\}\subset \Gamma$ be a primitive basis of $\Pi$, then 
    \begin{align*}
        \rho_\Pi: \mathbb{T}^3
        &:=\R^3/\left(2\pi\mathbb{Z}\right)^3\rightarrow \text{Diff}\left( \R^7/\Gamma, \varphi_o\right),\\
        \rho_\Pi\left(\underline\lambda\right)\cdot \underline x
        & = 
        \sum_{i=1}^3\frac{\lambda_i}{2\pi} U_i +\underline x.
    \end{align*} 
       \begin{itemize}
           \item If $\Pi$ is associative, then each orbit of the action $\rho_\Pi$ is associative;
           \item If $\Pi$ is isotropic, then each orbit of the action $\rho_\Pi$ is isotropic, and it is locally toric but not toric;
           \item For generic $\Pi$, the orbit is neither associative nor isotropic.
       \end{itemize}
    \item Let $\left(X, \underline\omega=\left(\omega_1,\omega_2,\omega_3\right)\right)$ be a hypersymplectic $4$-manifold. Then $M:=\T^3\times X$ is equipped with a closed $\rmG_2$-structure
    \begin{align*}
    \varphi&=\rmd t^{123}-\rmd t^1\wedge\omega_1-\rmd t^2\wedge\omega_2-\rmd t^3\wedge\omega_3
    \end{align*}
    whose dual $4$-form is 
    \begin{align*}
        *\varphi&=\text{dvol}_{g_{\underline\omega}}-\rmd t^{23}\wedge\tilde\omega^1-\rmd t^{31}\wedge\tilde\omega^2-\rmd t^{12}\wedge\tilde\omega^3,
    \end{align*}
    where $\T^3=S^1\times S^1\times S^1=\left\{(e^{it_1},e^{it_2},e^{it_3})|t_1,t_2,t_3\in [0,2\pi]\right\}$, $\tilde\omega^i=Q^{ij}\omega_j$, $Q_{ij}=\frac{1}{2}\langle \omega_i,\omega_j\rangle_{g_{\underline\omega}}$ and $\left(Q^{ij}\right)=\left(Q_{ij}\right)^{-1}$ (see \cite{Fine-Yao} for more details). The canonical $\mathbb{T}^3$-action, which acts as translations in the first factor of $\mathbb{T}^3\times X$, is locally multi-Hamiltonian for $\varphi$ and is free with each orbit being associative. If $\left(X, \underline\omega\right)$ is locally tri-Hamiltonian hyperk\"ahler, then $\left(M, \varphi\right)$ is locally toric $\rmG_2$-manifold. As mentioned above, this is a rather restricted class of $\T^3$-invariant closed $\rmG_2$-structure.
\end{enumerate}
\end{example}

\begin{remark}    
    There are infinitely many known very interesting and geometrically rich complete toric $\rmG_2$-manifolds with full $\rmG_2$-holonomy \cite{DMS, FHN}. However, without the assumption of completeness there do exist examples of $\rmG_2$-manifolds with full $\rmG_2$-holonomy and $\T^3$-symmetry, given by torsion-free hypersymplectic structures \cite{Don3, Fine-Yao2} through the way in Example \ref{example-standard}. Therefore, it is interesting to ask if complete $\rmG_2$-manifold with $\T^3$-symmetry and associative orbits can have full holonomy.
\end{remark}

With the mentioned motivations, we consider closed $\rmG_2$-structure and then torsion-free $\rmG_2$-structures  with effective $\mathbb{T}^3$-symmetry in this article. Compactness of the manifold is not assumed in general, but we do assume the 7-manifold is connected throughout the article. Let $(M^7,\varphi)$ be a $7$-manifold with a $\T^3$-invariant closed $\rmG_2$-structure, and the action is effective. Let $\xi:\frt^3\to\friso(M,\varphi)\sub\Gamma(TM)$ be the differential of the action $\T^3\to\Iso(M,\varphi)$. Take a basis $\left\{\mathfrak{u}_1, \mathfrak{u}_2, \mathfrak{u}_3\right\}$ for $\frt^3$, and let $U_i:=\xi(\mathfrak{u}_i)$ be the vector field on $M$ generated by $\mathfrak{u}_i$. Then by the Cartan formula,
\[
U_i\lrcorner\varphi,\;
U_i\wedge U_j\lrcorner\varphi
\]
are closed $\T^3$-invariant forms and $\varphi(U_1,U_2,U_3)$ is a constant function. If this constant vanishes, then each orbit is isotropic. If it is nonzero, then each orbit is non-isotropic. If further $\varphi$ is assumed to be torsion-free, then 
\begin{align*}
U_1\wedge U_2\wedge U_3\lrcorner{*\varphi}
\end{align*}
is also closed and invariant.\\

We summarize the structure and main results of this paper. Troughout the article, we assume the 7-manifold in consideration is connected. As a preparation, in section \ref{sect:canonical-decomposition} we decompose a linear $\rmG_2$-form canonically adapted to a 3-dimensional subspace, in terms of certain natural 1-forms and definite triples. The results are presented here as elementary linear algebra lemmas (Lemma \ref{lem:canonical-decomposition-isotropic} and Lemma \ref{lem:canonical-non-isotropic}), to emphasize the natural emerging of definite triples of 2-forms, and to make them applicable to more general scenarios such as $\rmG_2$-structures with associative fibrations or $3$-dimensional symmetry. In the following sections, we apply the canonical decompositions to the concrete situation of $\rmG_2$-structure with effective $\T^3$-symmetry. Firstly, closed $\rmG_2$-structures with effective $\mathbb{T}^3$-symmetry are roughly classified into the following two types, according to the orbits being non-isotropic or isotropic:
\begin{enumerate}
    \item[1)] Type I: orbits are non-isotropic. In this case, the action is almost-free, and $M/\mathbb{T}^3$ is a \emph{good hypersymplectic orbifold} with singularities of cyclic type, and $\left(M,\varphi\right)$ is covered by $\left(\T^3\times \boldsymbol X, c\rmd t^{123}-c^{-1}\rmd t^i\wedge \boldsymbol\omega_i +\boldsymbol\Omega\right)$ where $\underline{\boldsymbol\omega}=(\boldsymbol\omega_1,\boldsymbol\omega_2,\boldsymbol\omega_3)$ is a hypersymplectic structure, and $\boldsymbol \Omega$ is a closed $3$-form on $\boldsymbol X$. Moreover, $\boldsymbol X\rightarrow M/T^3$ is an orbifold covering and $\underline{\boldsymbol\omega}, \boldsymbol\Omega$ descend to the 4-orbifold (see Theorem \ref{thm:nonisotropic-closed-structure}).
    \item[2)] Type II: orbits are isotropic. In this case, the action is locally multi-Hamiltonian for $\varphi$, and the open dense subset of principal orbits is foliated by hypersymplectic manifolds with free $\mathbb{T}^3$-symmetry. See section \ref{sect:hypersymplectic-foliation} for detail.
\end{enumerate}

\noindent Secondly, if the closed $\rmG_2$-structure is assumed to be torsion-free, we obtain some partial classification results where hypersymplectic geometry plays an essential role:
\begin{enumerate}
    \item[1)] Type I: orbits are non-isotropic. Two hypersymplectic structures $\underline\omega=(\omega_1,\omega_2,\omega_3)$ and $\underline{\tilde\omega}=(\tilde\omega^1,\tilde\omega^2,\tilde\omega^3)$ are singled out from $\varphi$ and $*\varphi$ (See Theorem \ref{thm:nonisotropic-torsionfree-structure}) respectively. Also, a Generalized Gibbons–Hawking Ansatz analogous to Madsen-Swann Ansatz \cite{Madsen-Swann} is obtained (See Theorem \ref{thm:Gibbons-Hawking}).
    \item[2)] Type II: orbits are isotropic. The $\rmG_2$-structure is \emph{locally toric}. See section \ref{sect:trivalent-graph} for detail.
\end{enumerate}

\noindent Thirdly, if the torsion-free $\rmG_2$-structure is further assumed to be complete with constant volume of orbits, we prove several Liouville type theorems addressing questions from the second motivation above. Exactly three possibilities occur:
\begin{enumerate}
    \item[1.1)] Type Ia: orbits are purely non-isotropic non-associative. Then the 4-orbifold becomes a flat manifold, and $\varphi$ is flat (See Theorem \ref{thm:complete-torsion-free-constant-volume-non-associative}).
    \item[1.2)] Type Ib: orbits are purely associative. Then the hypersymplectic 4-orbifold becomes a hyperk\"ahler orbifold, and the $\T^3$-action is flat (See Theorem \ref{thm:associative-hyperkahler}).
    \item[2)] Type II: orbits are isotropic. Then all orbits are principal, and $\varphi$ is flat (See Theorem \ref{thm:complete-isotropic-const-volume}).
\end{enumerate}

The geometric assumption on the volume of the orbits appears to be technical at this moment as we cannot rule out the co-existence of non-associative and associative orbits. As Example \ref{ex:coexistence} shows, non-associative and associative orbits could indeed coexist for $\T^3$-invariant closed $\rmG_2$-structure. However, we expect it possible to be removed for torsion-free $\rmG_2$-structure at least in the case of non-isotropic orbits. We should also remark that the various classification results presented in this article are limited to manifold with holonomy group strictly contained in $\rmG_2$, and are thus Liouville type theorems. The full classification of complete (locally) toric $\rmG_2$-manifolds, the class of which most of the known full $\rmG_2$-holonomy manifolds with $\T^3$-symmetry belong to, to the extend of $S^1$-invariant complete hyperk\"ahler 4-manifold \cite{Swann} seems very challenging.

\section{Canonical decomposition adapted to 3-dimensional subspaces}
\label{sect:canonical-decomposition}

Let $\varphi$ be a linear $\rmG_2$-structure on the $7$-dimensional real vector space $\mathbb{V}$, i.e. a constant coefficients $\rmG_2$-structure on the manifold $\mathbb{V}$ under any fixed global basis of $\mathbb{V}$. Recall that a subspace $\mathbb{W}\subset \mathbb{V}$ is called \emph{isotropic} if $\varphi|_\mathbb{W}=0$, and \emph{non-isotropic} otherwise. The structure $\varphi$ determines a cross-product $\times_\varphi$ and a Riemannian metric $g_\varphi$, and a 3-dimensional subspace $\mathbb{W}$ is called \emph{associative} if $\mathbb{W}$ is closed under $\times_\varphi$, and \emph{non-associative} otherwise. The orthogonal complement of an associative subspace is called a \emph{coassociative} subspace.

Given any $3$-dimensional subspace $\mathbb{U}$, we are going to decompose $\varphi$ in terms of the natural $1$-forms, i.e. elements in 
\[
\Lambda_\mathbb{U}= \left\{\left(U'\wedge U''\right)\lrcorner \varphi\in\mathbb{V}^*|U', U''\in \mathbb{U}\right\},
\]
and certain definite triple of $2$-forms on the common kernel space of these $1$-forms, i.e. 
\[
\mathbb{K}_\mathbb{U}:=\Null\left(\Lambda_\mathbb{U}\right)= \bigcap_{\alpha\in \Lambda_\mathbb{U}\backslash\{0\}} \ker\alpha.
\]
The subset $\Lambda_\mathbb{U}\subset \mathbb{V}^*$ is a $3$-dimensional subspace, and $\mathbb{K}_\mathbb{U}\subset \mathbb{V}$ is a $4$-dimensional subspace. The relation between $\mathbb{U}$ and $\mathbb{K}_\mathbb{U}$ in the following typical situations is:
\begin{itemize}
    \item If $\mathbb{U}$ is isotropic, then $\mathbb{U}\subset \mathbb{K}_{\mathbb{U}}$;
    \item If $\mathbb{U}$ is non-isotropic and non-associative, then $\mathbb{U}$ is transversal to $\mathbb{K}_{\mathbb{U}}$, and the orthogonal projection induces an isomorphism $pr:\mathbb{K}_{\mathbb{U}}\rightarrow \mathbb{U}^{\perp_{g_\varphi}}$;
    \item If $\mathbb{U}$ is associative, then $\mathbb{K}_{\mathbb{U}}=\mathbb{U}^{\perp_{g_\varphi}}$ and $pr=id$.
\end{itemize}

In this section, we derive several elementary linear algebra lemmas describing the relation between $3$-dimensional subspaces and definite triples of $2$-forms for a $\rmG_2$-structure. These lemmas will be useful in the study of $\rmG_2$-structures with $3$-dimensional symmetries (such as $\T^3$ or $\SU(2)$ symmetry), and manifolds with fibration structures (such as $\rmG_2$-manifolds fibered by associative or isotropic submanifolds) in a way analogous to Karigiannis-Lotay's treatment on coassociative fibrations \cite{Kari-Lotay}. Let us emphasize that the decomposition in the isotropic case below is obtained in Madsen-Swann \cite{Madsen-Swann}, and we extend their arguments and detail out the computations in order to see how definite triples of 2-forms emerge from a $\rmG_2$-structure, which is a crucial new ingredient for our study. 

\begin{definition}[Horizontal form]
    A $k$-form $\lambda$ on $\mathbb{V}$ is called horizontal if $U\lrcorner\lambda =0$ as a $(k-1)$-form for any $U\in \mathbb{U}$.
\end{definition}

\subsection{Non-associative subspace}\,
Suppose $\mathbb{U}$ is non-associative $3$-dimensional subspace. Let $\left\{e_1, e_2, e_3\right\}$ be an orthonormal basis of $\mathbb{U}$. The ``non-associativity'' means $e_3$ and $e_1\times e_2$ are not proportional, which in turns implies the existence of the unit vector
\[
\tilde e_3 
:= 
\frac{e_3-g_\varphi\left(e_3, e_1\times e_2\right)e_1\times e_2}{\left|e_3-g_\varphi\left(e_3, e_1\times e_2\right)e_1\times e_2\right|}
\]
which is orthogonal to $e_1, e_2, e_1\times e_2$. The key feature of the ``non-associativity'' of $\mathbb{U}$ is that the three vectors $e_1, e_2, e_3$ span $\mathbb{V}$ under the cross product $\times=\times_\varphi$. Define
\[
\tilde e_1:=e_1,\;\tilde e_2:=e_2,
\]
then we have an adapted basis \cite{Madsen-Swann}
\begin{equation}
\label{adapted}
    -\tilde e_2\times\tilde e_3,\;
    -\tilde e_3\times\tilde e_1,\;
    -\tilde e_1\times\tilde e_2,\;
    -[\tilde e_1, \tilde e_2,\tilde e_3],\;
    \tilde e_1,\;
    \tilde e_2,\;
    \tilde e_3,
\end{equation}
where $\tilde e_i\times\tilde e_j:=\left(\tilde e_{ij}\lrcorner\varphi\right)^\sharp,\left[\tilde e_1,\tilde e_2,\tilde e_3\right]:=\left(\tilde e_{123}\lrcorner{*\varphi}\right)^\sharp$.
Under this adapted basis, we can write (for simplicity we ignore the symbol ``$\wedge$'' and ``$\otimes$'' between covectors if there is no confusion)
\begin{align*}
    \varphi
    &=-\tilde\alpha^{123}-\tilde\alpha^i\tilde\beta\tilde e^i+\frac12{\varepsilon}_{ijk}\tilde\alpha^i\tilde e^{jk},\\
    *\varphi
    &=\tilde e^{123}\tilde\beta+\frac12{\varepsilon_{ijk}}\tilde\alpha^{ij}\tilde\beta\tilde e^k-\frac12\tilde\alpha^{ij}\tilde e^{ij},\\
    g_\varphi
    &=(\tilde e^1)^2+(\tilde e^2)^2+(\tilde e^3)^2+(\tilde\alpha^1)^2+(\tilde\alpha^2)^2+(\tilde\alpha^3)^2+\tilde\beta^2,\\
    \vol_\varphi
    &=\tilde\alpha^{123}\tilde\beta\tilde e^{123},
\end{align*}
where $\tilde\alpha^i:=\frac12{\varepsilon}^{ijk}\tilde e_{jk}\lrcorner\varphi,\tilde\beta:=\tilde e_{123}\lrcorner{*\varphi}$ and $\left\{-\tilde\alpha^1,-\tilde\alpha^2,-\tilde\alpha^3, -\tilde\beta, \tilde e^1, \tilde e^2,\tilde e^3\right\}$ is the dual basis of the adapted basis \eqref{adapted}.

Denote $a:=\varphi\left(e_1,e_2,e_3\right)$ and $b:=\left|e_3-g_\varphi\left(e_3,e_1\times e_2\right)e_1\times e_2\right|$ such that $e_3=ae_1\times e_2+b\tilde e_3$. Let $\bar\alpha^i:=\frac12{\varepsilon}^{ijk}e_{jk}\lrcorner\varphi$ and $\bar\beta:=e_{123}\lrcorner{*\varphi}$, then we have a co-basis transform
\begin{align*}
    \tilde e^1
    &=e^1,\tilde e^2=e^2,\tilde e^3=\frac1be^3-\frac ab\bar\alpha^3,\\
    \tilde\alpha^1
    &=e_2\left(\frac1be_3-\frac abe_1\times e_2\right)\lrcorner\varphi=\frac1b\bar\alpha^1-\frac abe^1,\\
    \tilde\alpha^2
    &=\left(\frac1be_3-\frac abe_1\times e_2\right)e_1\lrcorner\varphi=\frac1b\bar\alpha^2-\frac abe^2,\\
    \tilde\alpha^3
    &=e_{12}\lrcorner\varphi=\bar\alpha^3,\\
    \tilde\beta
    &=e_{12}\left(\frac1be_3-\frac abe_1\times e_2\right)\lrcorner*\varphi=\frac1b\bar\beta.
\end{align*}
Under this intermediate co-basis, routine computations shows that
\begin{align}
    \varphi
    &=-\frac1{b^2}\bar\alpha^{123}-\frac1{b^2}\bar\alpha^i\bar\beta e^i+\frac1{2b^2}{\varepsilon}_{ijk}\bar\alpha^i e^{jk}-\frac{2a}{b^2}e^{123}, \nonumber\\
    *\varphi
    &=\frac{b^2-a^2}{b^4}e^{123}\bar\beta
    -\frac{1}{2b^2}\bar\alpha^{ij}e^{ij}+\frac{a}{2b^4}{\varepsilon_{ijk}}e^{ij}\bar\alpha^k\bar\beta
    -\frac{1}{2b^4}{\varepsilon}_{ijk}\bar\alpha^{ij}e^k\bar\beta
    +\frac{a}{b^4}\bar\alpha^{123}\bar\beta, \nonumber\\
    g_\varphi
    &=\frac{1}{b^2}\left(\left(e^1\right)^2+\left(e^2\right)^2+\left(e^3\right)^2+(\bar\alpha^1)^2+(\bar\alpha^2)^2+(\bar\alpha^3)^2+\bar\beta^2\right)-\frac{a}{b^2}\left(e^i\otimes\bar\alpha^i+\bar\alpha^i\otimes e^i\right),\\
    \vol_\varphi&=\frac1{b^4}\bar\alpha^{123}\bar\beta e^{123}.
\end{align}

Let $\left\{U_1,U_2,U_3\right\}$ be a general basis of $\mathbb{U}$. Denote\footnote{In contrast to Madsen-Swann's use of the cyclic permutation notation $(ijk)$ and the corresponding lower index for $\rmd \nu_i$, we adopt $\alpha^i$, $\varepsilon_{ijk}$ and $\varepsilon^{ijk}$, to make it more consistent with Einstein's summation convention.} $\alpha^i:=\frac12{\varepsilon}^{ijk}U_{jk}\lrcorner\varphi$, $\beta:=U_{123}\lrcorner{*\varphi}$ and $c:=\varphi(U_1,U_2,U_3)$. Then, $\left\{\alpha^1,\alpha^2,\alpha^3\right\}$ is a basis of $\Lambda_{\mathbb{U}}$ and $\mathbb{K}_\mathbb{U}=\bigcap_i \ker\alpha^i$. Moreover, let $\theta^i$'s be the covectors on $\mathbb{V}$ defined by 
\begin{align*}
    \theta^i(U_j)
    & =\delta^i_j, \\
    \theta^i|_{\mathbb{U}^{\perp_{g_\varphi}}}
    & = 0.
\end{align*}
Let\footnote{Notice that the notations $A$ and $B$ used in this article are converse to Madsen-Swann's notations \cite[Page 3468-3470]{Madsen-Swann}.} $A=(A_{ij})$ be the positive definite symmetric $3\times 3$-matrix with $A_{ij}=g_\varphi\left(U_i, U_j\right)$, and  $B=\left(B_i^j\right)$ be the positive square root of $A^{-1}$. The vectors $\left\{e_i=B_i^j U_j|i=1,2,3\right\}$ form an orthonormal basis of $\mathbb{U}$. Then we have another co-basis transform
\begin{align*}
    e^i&=\left(B^{-1}\right)^i_j\theta^j,\\
    \bar\alpha^i&=\frac12{\varepsilon}^{ijk}B_j^lB_k^mU_{lm}\lrcorner\varphi=\left(\adj B\right)_j^i\alpha^j,\\
    \bar\beta&=B_1^iB_2^jB_3^kU_{ijk}\lrcorner{*\varphi}=\left(\det B\right)\beta.
\end{align*}
Again by routine computations we can rewrite
\begin{align}
\label{varphi-decomposition-nonassociative}
    \varphi={}
    &-\frac{\det B^2}{b^2}\alpha^{123}+\frac{\det A^{-1}A_{ij}}{b^2}\beta\alpha^i\theta^j+\frac{1}{2b^2}{\varepsilon}_{ijk}\alpha^i\theta^{jk}-\frac{2a\det B^{-1}}{b^2}\theta^{123}, \\
\label{star-varphi-decomposition-isotropic} *\varphi={}&-\frac{b^2-a^2}{b^4}\beta\theta^{123}+\frac{\det B^2}{2b^4}{\varepsilon}_{ijk}\alpha^{ij}\beta\theta^k-\frac1{4b^2}\varepsilon_{ijk}\varepsilon_{lmn}A^{nk}\alpha^{ij}\theta^{lm}\nonumber\\
    &+\frac{a\det B}{2b^4}{\varepsilon}_{ijk}\alpha^i\beta\theta^{jk}+\frac{a\det B^3}{b^4}\alpha^{123}\beta,\\
    g_\varphi={}&\frac1{b^2}\left(\theta^t B^{-2}\theta+\det A^{-1}\alpha^tA\alpha+\det A^{-1}\beta^2\right)-\frac{2a\det B}{b^2}\theta^tA\alpha,\\
    \vol_\varphi=&{}\frac{\det A^{-1}}{b^4}\alpha^{123}\beta\theta^{123}.
\end{align}
Also, $c=a\det B^{-1}$.

Define $\hat\alpha^i:=\alpha^i-c\theta^i$, then for any $i$ and $j$, 
\[
\hat\alpha^i\left(U_j\right)=0,\; \beta(U_j)=0,
\]
i.e. they are horizontal. 
Using the coframe $\left\{\theta^1, \theta^2,\theta^3, \hat\alpha^1,\hat\alpha^2,\hat\alpha^3,\beta\right\}$ of $\mathbb{V}$, we can rewrite
\begin{align}
    \varphi
    &=a\det B^{-1}\theta^{123}+\frac12{\varepsilon}_{ijk}\hat\alpha^i\theta^{jk}-\frac{a\det B}{2b^2}{\varepsilon}_{ijk}\hat\alpha^{ij}\theta^k+\frac{\det A^{-1}A_{ij}}{b^2}\beta\hat\alpha^i\theta^j-\frac{\det A^{-1}}{b^2}\hat\alpha^{123},\\
    *\varphi
    &=-\beta\theta^{123}-\frac{A^{pq}}{4b^2}{\varepsilon_{pkl}}\varepsilon_{qst}\hat\alpha^{kl}\theta^{st}
    +\frac{a\det B}{2b^2}\varepsilon_{ijk}\beta\hat\alpha^i\theta^{jk}+\frac{\det A^{-1}}{2b^2}\varepsilon_{ijk}\beta \hat\alpha^{ij}\theta^k
    +\frac{a\det B^3}{b^4}\hat\alpha^{123}\beta, \nonumber\\
    g_\varphi
    &=A_{ij}\theta^i\theta^j+\frac{\det A^{-1}A_{ij}}{b^2}\hat\alpha^i\hat\alpha^j+\frac{\det A^{-1}}{b^2}\beta^2, \nonumber\\
    \vol_\varphi
    &=\frac{\det A^{-1}}{b^4}\hat\alpha^{123}\beta\theta^{123}.\nonumber
\end{align}
This expression holds as long as $\mathbb{U}$ is non-associative.\\

\subsubsection{Isotropic subspace}~
If $\mathbb{U}$ is assumed to be isotropic, then $c=0$ and $a=0$ and $b^2=1$. Moreover, in this case $\alpha^i=\hat\alpha^i$ for $i=1,2,3$ becomes horizontal. The formulae \eqref{varphi-decomposition-nonassociative} and \eqref{star-varphi-decomposition-isotropic} are precisely the decompositions obtained by \cite[Proposotion 3.2]{Madsen-Swann}. The first term in \eqref{varphi-decomposition-nonassociative} only involves horizontal directions, and the first term in \eqref{star-varphi-decomposition-isotropic} does not involve components in $\alpha^i$'s. From the second and third terms of $\varphi$, and the second and third terms of $*\varphi$, we can single out the triple of $2$-forms 
\begin{align*}
    \omega_i
    & = 
    \frac{1}{2}\varepsilon_{ijk}\theta^{jk}
    + 
    \det A^{-1}A_{ip}\theta^p\beta,\\
    \tilde\omega^k
    & = 
    \frac{1}{2}\det A^{-1}\beta\theta^k
    - 
    \frac{1}{2}\varepsilon_{klm}A^{nk}\theta^{lm},
\end{align*}
which satisfy 
\begin{align}
    \omega_i\wedge\omega_j
    & = 
    2\det A^{-1} A_{ij}\theta^{123}\beta
    = 
    2\det A^{-\frac{7}{6}} \frac{A_{ij}}{\det A^\frac{1}{3}}\vol_\mathbb{U}\beta, \\
    \tilde\omega^i\wedge\tilde\omega^j
    & = 
    2\det A^{-1}A^{ij}\theta^{123}\beta
    = 
    2\det A^{-\frac{11}{6}}\frac{A^{ij}}{\det A^{-\frac{1}{3}}} \vol_\mathbb{U}\beta.
\end{align}
Let $\left\{U_1, U_2, U_3, W_1, W_2, W_3,W\right\}$ be the dual basis of $\left\{\theta^1,\theta^2,\theta^3, \alpha^1,\alpha^2,\alpha^3,\beta\right\}$, then $\underline\omega$ and $\underline{\tilde\omega}$ are definite triple on $\mathbb{U}\oplus \langle W\rangle= \langle W_1, W_2, W_3\rangle^{\perp_{g_\varphi}}=\mathbb{K}_\mathbb{U}$.

\begin{lem}[{Canonical Decomposition adapted to an isotropic subspace}.]
\label{lem:canonical-decomposition-isotropic}
   Let $\varphi$ be a linear $\rmG_2$-structure on the vector space $\mathbb{V}$, and $\mathbb{U}$ is an isotropic $3$-dimensional subspace.  Take any basis $U_1, U_2, U_3$  of $\mathbb{U}$, there holds the following canonical decomposition
\begin{align}
    \varphi
    & = 
    -\det A^{-1}\alpha^{123} + \alpha^i\wedge\omega_i, \\
    *\varphi
    & = 
    -\beta\theta^{123} + \frac{1}{2}\varepsilon_{ijk}\alpha^{ij}\wedge\tilde\omega^k,
\end{align}
where the triple of $2$-forms
\begin{align}
    \omega_i 
    & = \frac{1}{2}\varepsilon_{ijk}\theta^{jk} - \left(\det A^{-1}\right)A_{ip}\beta\theta^p, \\
    \tilde\omega^i
    & = 
    -\frac{1}{2}\varepsilon_{pqr}A^{ip}\theta^{qr}
    + 
    \left(\det A^{-1}\right) \beta\theta^i
\end{align}
satisfy 
\begin{align}
    \omega_i\wedge\omega_j
    & = 
    2\det A^{-1} A_{ij}\theta^{123}\beta
    = 
    2\det A^{-\frac{7}{6}} \frac{A_{ij}}{\det A^\frac{1}{3}}\vol_\mathbb{U}\beta, \\
    \tilde\omega^i\wedge\tilde\omega^j
    & = 
    2\det A^{-1}A^{ij}\theta^{123}\beta
    = 
    2\det A^{-\frac{11}{6}}\frac{A^{ij}}{\det A^{-\frac{1}{3}}} \vol_\mathbb{U}\beta, 
\end{align}
i.e. they are definite on $\mathbb{K}_\mathbb{U}$.
\end{lem}

The relation between $\mathbb{U}$ and $\mathbb{K}_{\mathbb{U}}$ when $\mathbb{U}$ is isotropic is illustrated in the following figure.

\begin{figure}[h!]
\centering
    \begin{tikzpicture}
    \draw[thick] (-1,0) -- (3,0) (0,-1) -- (0, 3);
    \draw (0,3) node[right=0.2em]{$\mathbb{U}\subset\mathbb{K}_\mathbb{U}$};
    \draw[thick, blue] (0.04,-1) -- (0.04,3);
    \draw (3,0) node[below=0.3em]{$\mathbb{U}^{\perp_{g_\varphi}}=\bigcap_i \ker\theta^i$,};
    \draw (3,2.5) node[below=0.3em]{$\underline\omega$ and  $\underline{\tilde\omega}$ are definite on $\mathbb{K}_\mathbb{U}$};
    \end{tikzpicture}
    \caption{The forms $\alpha^i$'s become horizontal, i.e. $\mathbb{U}\subset\mathbb{K}_\mathbb{U}$, in case $\mathbb{U}$ is isotropic.}
\end{figure}

\begin{remark}
\label{rmk:definite-triple-from-isotropic}
    The triple can also be expressed in the following form
    \begin{align*}
        \omega_i
        & = 
        W_i\lrcorner \varphi + \frac{1}{2}\left(\det A^{-1}\right)\varepsilon_{ijk}\alpha^{jk}, \\
        \tilde\omega^k
        & = 
        \frac{1}{2}\varepsilon^{ijk}W_{ij}\lrcorner *\varphi + \beta\theta^k.
    \end{align*}
These formulae shows that $\omega_i|_{\mathbb{K}_\mathbb{U}}=\left(W_i\lrcorner \varphi\right)|_{\mathbb{K}_\mathbb{U}}$.  
\end{remark}~

\subsubsection{Non-isotropic subspace}~
If $\mathbb{U}$ is further assumed to be non-isotropic, then $c\neq 0$ and $\left\{\alpha^1,\alpha^2,\alpha^3,\hat\alpha^1,\hat\alpha^2,\hat\alpha^3,\beta\right\}$ forms another coframe of $\mathbb{V}$ under which we can rewrite
\begin{align}
    \label{data-in-alpha-hat-alpha}
    \varphi&=\frac{1}{c^2}\alpha^{123}-\frac{1}{2b^2c^2}\varepsilon_{ijk}\hat\alpha^{ij}\alpha^k+\frac{\det A^{-1}A_{ij}}{b^2c}\beta\hat\alpha^i\alpha^j+\frac{2}{b^2c^2}\hat\alpha^{123},\\
    *\varphi&=-\frac{1}{c^3}\beta\alpha^{123}+\frac{1}{2b^2c^3}\varepsilon_{ijk}\beta\hat\alpha^i\alpha^{jk}-\frac{A^{kn}}{4b^2c^2}\varepsilon_{ijk}\varepsilon_{lmn}\hat\alpha^{ij}\alpha^{lm}-\frac{1}{2b^2c^3}\varepsilon_{ijk}\beta\hat\alpha^{ij}\alpha^k+\frac{b^2-a^2}{b^4c^3}\beta\hat\alpha^{123},\\
    \label{metric-in-alpha-hat-alpha}g_\varphi&=\frac{A_{ij}}{c^2}\alpha^i\alpha^j-\frac{2A_{ij}}{c^2}\alpha^i\hat\alpha^j+\frac{A_{ij}}{b^2c^2}\hat\alpha^i\hat\alpha^j+\frac{\det A^{-1}}{b^2}\beta^2,\\
    \vol_\varphi&=\frac{\det A^{-1}}{b^4c^3}\hat\alpha^{123}\beta\alpha^{123}.
\end{align}

Observing the expression for $\varphi$, we single out the $2$-forms
\begin{align}
\omega_k:=\frac1{2b^2}\varepsilon_{ijk}\hat\alpha^{ij}+\frac{c\det A^{-1}A_{ik}}{b^2}\hat\alpha^i\beta,\;\; k=1,2,3,
\end{align}
and can write 
\begin{align}
\label{varphi-decomposition-in-nonassociative}    
\varphi
= 
\frac{1}{c^2}\alpha^{123} - \frac{1}{c^2}\alpha^i\wedge\omega_i + 
\frac{2}{b^2c^2}\hat\alpha^{123}.
\end{align}
Then $\underline{\omega}:=(\omega_1,\omega_2,\omega_3)$ is a triple of horizontal $2$-forms satisfying
\begin{align}
    \frac12\omega_i\wedge\omega_j
    & =\frac{c\det A^{-1}A_{ij}}{b^4}\hat\alpha^{123}\beta=\det A^{-1/3}A_{ij}\vol_{\underline{\omega}}
     = c \det A A_{ij}*\theta^{123}.
\end{align}
This means it is a \emph{definite triple} on $\mathbb{U}^{\perp_{g_\varphi}}$ in the sense of \cite{Don, Fine-Yao}, and induces a definite bilinear form 
\begin{align*}
    g_{\underline{\omega}}
    &=\frac{\det A^{-1/3}A_{ij}}{b^2}\hat\alpha^i\hat\alpha^j+\frac{c^2\det A^{-4/3}}{b^2}\beta^2
    \end{align*} 
on $\mathbb{U}^{\perp_{g_\varphi}}$, which is positive with respect to the volume form (i.e. orientation)
    \begin{align*}
    \vol_{\underline{\omega}}
    &=\frac{c\det A^{-2/3}}{b^4}\hat\alpha^{123}\beta 
    =c\det A^\frac{4}{3} *\theta^{123}.
\end{align*}
Actually, the triple can be expressed more intrinsically as
\begin{align}
\label{intrinsic-expression-for-omega}
    \omega_k
    & =-cU_k\lrcorner\varphi+\frac12\varepsilon_{ijk}\alpha^{ij}, \;\\
    \vol_{\underline\omega}
    & = 
    c\det A^\frac{5}{6} \vol_{\mathbb{U}^{\perp_{g_\varphi}}}
\end{align}
without referring to the factor $\frac{1}{b^2}$ which is $\infty$ ( and does not make sense in the situation $\mathbb{U}$ is associative).

Similarly, observing the expression for $*\varphi$, we single out the $2$-forms 
\begin{align}
 \tilde\omega^i
     & =-\frac{1}{b^2}\beta\hat\alpha^i+\frac{cA^{pi}}{2b^2}{\varepsilon_{pkl}}\hat\alpha^{kl}, \;\; i=1,2,3,
\end{align}
and write
\begin{align*}
\label{star-varphi-decomposition-in-nonassociative}    
*\varphi
& = 
    -\frac{1}{c^3}\beta\alpha^{123}-\frac{1}{2c^3}\varepsilon_{ijk}\tilde\omega^i\wedge\alpha^{jk}-\frac{1}{c^3}\omega_i\wedge\beta\alpha^i
    +\frac{b^2-a^2}{b^4c^3}\beta\hat\alpha^{123}.
\end{align*}
The triple $\underline{\tilde\omega}=\left(\tilde\omega^1,\tilde\omega^2,\tilde\omega^3\right)$ is also a triple of horizontal $2$-forms satisfying
\begin{align}
    \frac{1}{2}\tilde\omega^i\wedge\tilde\omega^j
    & =     
      \det A^\frac{1}{3}A^{ij}\vol_{\underline{\tilde\omega}}
      =
      \frac{c A^{ij}}{b^4}\hat\alpha^{123}\beta
\end{align}
meaning that it is also a \emph{definite triple} on $\mathbb{U}^{\perp_{g_\varphi}}$ as $\underline\omega$ introduced above. It induces a positive definite bilinear form
\begin{align*}
    g_{\underline{\tilde\omega}}&=\frac{c\det A^{-2/3}A_{ij}}{b^2}\hat\alpha^i\hat\alpha^j+\frac{c^{-1}\det A^{1/3}}{b^2}\beta^2
\end{align*}
on $\mathbb{U}^{\perp_{g_\varphi}}$ with respect to the volume form
\begin{align*}
    \vol_{\underline{\tilde\omega}}
    &=\frac{c\det A^{-1/3}}{b^4}\hat\alpha^{123}\beta
    =  c\det A^\frac{5}{3}*\theta^{123}.
\end{align*}
Also as above, we have the more intrinsic expressions
\begin{align}
\label{intrinsic-expression-for-tildeomega}
    \tilde\omega^k
    & =-\frac12c\varepsilon^{ijk}U_{ij}\lrcorner{*}\varphi+\alpha^k\beta, \\
    \vol_{\underline{\tilde\omega}}
    & = c\det A^\frac{7}{6}\vol_{\mathbb{U}^{\perp_{g_\varphi}}}
\end{align}
without referring to the factor $\frac{1}{b^2}$.

\subsection{Associative subspace}\,
Suppose $\mathbb{U}$ is an associative $3$-dimensional subspace. Let $\left\{e_1,e_2,e_3\right\}$ be an orthonormal basis of $\mathbb{U}$, then $a:=\varphi(e_1,e_2,e_3)=1$. Without loss of generality, assume $a=1$. The key feature in this ``associative'' case is that $e_3=e_1\times e_2$ and $\left\{e_1,e_2,e_3\right\}$ is closed under $\times$. The set $e_1,e_2,e_3$ can be extended (although highly non-uniquely, actually there exists precisely an $\SU(2)$-worth of such choice) to an adapted basis $\left\{e_1,e_2,e_3,f_0,f_1,f_2,f_3\right\}$ such that
\begin{align*}
    \varphi&=e^{123}-e^i\wedge\left(f^{0i}+\frac12\varepsilon_{ijk}f^{jk}\right),\\
    *\varphi&=f^{0123}-\frac12\varepsilon_{ijk}e^{ij}\wedge\left(f^{0k}+\frac12\varepsilon_{kab}f^{ab}\right),\\
    g_\varphi&=(e^1)^2+(e^2)^2+(e^3)^2+(f^0)^2+(f^1)^2+(f^2)^2+(f^3)^2,\\
    \vol_\varphi&=e^{123}f^{0123}
\end{align*}
under the dual basis $\left\{e^1,e^2,e^3,f^0,f^1,f^2,f^3\right\}$.

Let $\left\{U_1, U_2, U_3\right\}$ be a general basis of $\mathbb{U}$ such that $c=\varphi(U_1,U_2,U_3)>0$, and $A, B$ are the matrice defined as above, and $\theta^i$'s be the $1$-forms defined as above. Let $e_i=B_i^jU_j$ be the formed orthonormal basis of $\mathbb{U}$, then the ``associativity'' tells that $c=\varphi(U_1,U_2,U_3)=a\det B^{-1}=\det B^{-1}$. Since $e^i=(B^{-1})^i_j\theta^j$, we can rewrite
\begin{align}
    \varphi
    &=\det B^{-1}\theta^{123}-\theta^a\wedge(B^{-1})_a^i\left(f^{0i}+\frac12\varepsilon_{ijk}f^{jk}\right),\\
    *\varphi
    &=f^{0123}-\frac{\det B^{-1}}2\varepsilon_{ijl}\theta^{ij}\wedge B_k^l\left(f^{0k}+\frac12\varepsilon_{kab}f^{ab}\right),\\
    g_\varphi
    &=A_{ij}\theta^i\theta^j+(f^0)^2+(f^1)^2+(f^2)^2+(f^3)^2, \nonumber\\
    \vol_\varphi
    &=\det B^{-1}\theta^{123}f^{0123}\nonumber
\end{align}
under the basis $\left\{\theta^1,\theta^2,\theta^3,f^0,f^1,f^2,f^3\right\}$ of $\mathbb{V}^*$ dual to the basis $\left\{U_1,U_2, U_3, f_0,f_1,f_2,f_3\right\}$ of $\mathbb{V}$. According to the above formulae, the natural $1$-forms will be
\begin{align*}
    \alpha^i&:=\frac12\varepsilon^{ijk}U_{jk}\lrcorner\varphi=\det B^{-1}\theta^i,\\
    \beta&:=U_{123}\lrcorner{*}\varphi=0,\\
    \hat\alpha^i&:=\alpha^i-c\theta^i=0.
\end{align*}
Therefore we can rewrite $\varphi$ and $*\varphi$ as
\begin{align}
\label{varphi-decomposition-in-associative}
    \varphi&=\frac{1}{c^2}\alpha^{123}-\frac{1}{c^2}\omega_i\wedge\alpha^i,\\
\label{star-varphi-decomposition-in-associative}    
    *\varphi&=-\frac{1}{2c^3}\varepsilon_{ijk}\tilde\omega^i\wedge\alpha^{jk}+\bar\Omega,
\end{align}
where
\begin{align*}
    \omega_i&
    :=\left(\det B^{-1}\right)\left(B^{-1}\right)_i^a\left(f^{0a}+\frac12\varepsilon_{abc}f^{bc}\right)
    =-cU_i\lrcorner\varphi+\frac12\varepsilon_{ijk}\alpha^{jk},\\
    \tilde\omega^i
    &:=\left(\det B^{-2}\right)B^i_a\left(f^{0a}+\frac12\varepsilon_{abc}f^{bc}\right)
    =-\frac12c\varepsilon^{ijk}U_{jk}\lrcorner{*}\varphi,\\
    \bar\Omega
    &:=f^{0123}=*\varphi+\frac{1}{2c^3}\varepsilon_{ijk}\tilde\omega^i\wedge\alpha^{jk}.
\end{align*}
These formulae show that the $2$-forms $\omega_i$'s and $\tilde\omega^i$'s are independent of the particular choice of the complemented vectors $\left\{f_0,f_1,f_2,f_3\right\}$ (even though they appear a priori so), and agree with the intrinsic expressions \eqref{intrinsic-expression-for-omega} and \eqref{intrinsic-expression-for-tildeomega}. Moreover, the decomposition \eqref{varphi-decomposition-in-nonassociative} and \eqref{varphi-decomposition-in-associative}, \eqref{star-varphi-decomposition-in-nonassociative} and \eqref{star-varphi-decomposition-in-associative} also agree in the cases of $\mathbb{U}$ being non-isotropic non-associative and associative.

The triples of horizontal $2$-forms $\underline\omega$ and $\underline{\tilde\omega}$ satisfy 
\begin{align*}
    \frac12\omega_i\wedge\omega_j
    &=\left(\det A\right)A_{ij}\bar\Omega,\\
    \frac12\tilde\omega^i\wedge\tilde\omega^j
    &=\left(\det A\right)^2 A^{ij}\bar\Omega,
\end{align*}
and are therefore also definite on $\mathbb{U}^{\perp_{g_\varphi}}$. Since $pr:\mathbb{K}_\mathbb{U}\rightarrow \mathbb{U}$ is isomorphism, we can use $pr$ to pull back them to definite triples on $\mathbb{K}_\mathbb{U}$.~

\begin{lem}[Canonical decomposition adapted to a non-isotropic subspace.] 
\label{lem:canonical-non-isotropic}
Let $\varphi$ be a linear $\rmG_2$-structure on the vector space $\mathbb{V}$, and $\mathbb{U}$ is a non-isotropic $3$-dimensional subspace. Take any basis $U_1,U_2,U_3$ of $\mathbb{U}$, and denote $c=\varphi\left(U_1,U_2,U_3\right)\neq 0$. Define 
\begin{align*}
    & A_{ij}
      = g_\varphi\left(U_i, U_j\right), \\
    &\alpha^i 
     = \frac{1}{2}\varepsilon^{ijk}U_{jk}\lrcorner \varphi, \;\;
    \beta
     = U_{123}\lrcorner *\varphi, \\
    & \theta^i\left(U_j\right)
     = \delta^i_j, \;\; \theta^i|_{\mathbb{U}^{\perp_{g_\varphi}}}=0, \\
    &\omega_k
     =-cU_k\lrcorner\varphi+\frac12\varepsilon_{ijk}\alpha^{ij}, \;\\
    &\tilde\omega^k
    =-\frac12c\varepsilon^{ijk}U_{ij}\lrcorner{*}\varphi+\alpha^k\beta.
\end{align*}
Then there are the following decomposition
\begin{align}
\label{phi-canonical-decomposition}
    \varphi
    &=
    \frac{1}{c^2}\alpha^{123}-\frac{1}{c^2}\alpha^i\wedge\omega_i+\Omega,\\
    \label{starphi-canonical-decomposition}
    *\varphi
    &=
    -\frac{1}{c^3}\beta\alpha^{123}-\frac{1}{2c^3}\varepsilon_{ijk}\tilde\omega^i\wedge\alpha^{jk}-\frac{1}{c^3}\omega_i\wedge\beta\alpha^i+\bar\Omega.
\end{align}
Moreover, 

\begin{enumerate} 
\item The forms $\beta, \omega_k, \tilde\omega^k, \Omega, \bar\Omega$ are horizontal;
\item The decomposition is canonical, in the sense that each term in the decomposition \eqref{phi-canonical-decomposition}, \eqref{starphi-canonical-decomposition} is independent of the particular choice of basis $U_1,U_2,U_3$ of $\mathbb{U}$; 
\item The triples $\underline\omega=\left(\omega_1, \omega_2,\omega_3\right)$ and $\underline{\tilde\omega}=\left(\tilde\omega^1,\tilde\omega^2,\tilde\omega^3\right)$ are \emph{definite} on $\mathbb{U}^{\perp_{g_\varphi}}$ in the sense that
\begin{align*}
    \omega_i\wedge \omega_j
    & = 
    2c\det A^\frac{5}{6}\frac{A_{ij}}{\det A^\frac{1}{3}}\vol_{\mathbb{U}^{\perp_{g_\varphi}}}, \\
    \tilde\omega^i\wedge \tilde\omega^j
    & = 
    2c\det A^\frac{7}{6}\frac{A^{ij}}{\det A^{-\frac{1}{3}}}\vol_{\mathbb{U}^{\perp_{g_\varphi}}}
\end{align*}
where $\vol_{\mathbb{U}^{\perp_{g_\varphi}}}=\det A^\frac{1}{2}*\theta^{123}$ is a horizontal $4$-form whose restriction to $\mathbb{U}^{\perp_{g_\varphi}}$ is the volume form of $\mathbb{U}^{\perp_{g_\varphi}}$;
\item The triples $\underline\omega$ and $\underline{\tilde\omega}$ are also definite on $\mathbb{K}_\mathbb{U}$. 
\end{enumerate}
\end{lem}

The $1$-forms $\alpha^1,\alpha^2,\alpha^3$ are not horizontal, and we can obtain horizontal $1$-forms $\hat\alpha^i : = \alpha^i - c\theta^i$ (for $i=1,2,3$) by subtracting off their vertical components. These new forms $\hat\alpha^1,\hat\alpha^2,\hat\alpha^3$ together with $\beta$ behaves drastically different in the non-associative and associative subcases:

\begin{lem}
\label{lem:specific-form-of-hypersymplectic-in-nonisotropic}
   If $\mathbb{U}$ is non-isotropic non-associative, then $\left\{\hat\alpha^1, \hat\alpha^2, \hat\alpha^3,\beta\right\}$ are linearly independent, and under which there holds the extra decomposition 
\begin{align}
\label{hypersymplectic-in-nonisotropic-nonassociative}
\omega_k
&=\frac1{2b^2}\varepsilon_{ijk}\hat\alpha^{ij}+\frac{c\det A^{-1}A_{ki}}{b^2}\hat\alpha^i\beta,    \\
\label{dual-hypersymplectic-in-nonisotropic-nonassociative}
\tilde\omega^i
     & =-\frac{1}{b^2}\beta\hat\alpha^i+\frac{cA^{pi}}{2b^2}{\varepsilon_{pkl}}\hat\alpha^{kl}, \\
   \label{Omega-in-nonisotropic-nonassociative}
     \Omega
     & = \frac{2}{b^2c^2}\hat\alpha^{123}, \\
     \bar\Omega
     & = \frac{b^2-a^2}{b^4c^3}\beta\hat\alpha^{123},
   \end{align}
   where $a=c\det A^{-1/2},\;b^2=1-a^2$.
\end{lem}

\begin{lem}
    If $\mathbb{U}$ is associative, then $\hat\alpha^1=\hat\alpha^2=\hat\alpha^3=\beta=0$, and moreover, 
    \begin{align}
        \Omega 
        & = 0, \\
        \bar\Omega
        & = \vol_{\mathbb{U}^{\perp_{g_\varphi}}}.
    \end{align}
\end{lem}

\begin{figure}[h!]
\centering
    \begin{tikzpicture}
    \draw[thick] (-1,0) -- (3,0) (0,-1) -- (0,3);
    \draw (0,3) node[right=0.2em]{$\mathbb{U}=\langle U_1, U_2, U_3\rangle$};
    \draw[thick, blue] (-1,-0.5) -- (2,1);
    \draw (1.5,1.2) node[right=0.2em]{$\mathbb{K}_\mathbb{U}=\left\langle W_1-\frac{1}{c}U_1,W_2-\frac{1}{c}U_2,W_3-\frac{1}{c}U_3, W\right\rangle$};
    \draw (5,0) node[below=0.3em]{$\mathbb{U}^{\perp_{g_\varphi}}=\bigcap_i \ker\theta^i=\langle W_1,W_2,W_3,W\rangle$,};
    \draw (3,-0.5) node[below=0.3em]{$\underline\omega$ and $\underline{\tilde\omega}$ are definite on it};
    \draw[->] (1.5,0.1) -- (1.5, 0.6);
    \draw[->] (2,0.8) -- (2, 0.1);
    \draw (1.5,0.3) node[left=0.2em]{$\iota$} (2,0.4) node[right=0.2em]{$pr$};
    \end{tikzpicture}
    \caption{Direct sum decomposition of $\mathbb{V}=\mathbb{U}\oplus \mathbb{K}_\mathbb{U}$ and orthogonal decomposition $\mathbb{V}=\mathbb{U}\oplus \left(\bigcap_i\ker\theta^i\right)$, in case $\mathbb{U}$ is non-isotropic non-associative; Here $U_1, U_2, U_3, W_1,W_2,W_3,W$ is the dual basis of $\theta^1,\theta^2,\theta^3, \hat\alpha^1,\hat\alpha^2,\hat\alpha^3,\beta$.}
\end{figure}

\begin{remark}
  If $\mathbb{U}$ is a non-isotropic subspace, there are four Riemannian metrics on $\mathbb{U}^{\perp_{g_\varphi}}$. They are respectively induced by the submersion ($pr:\mathbb{V}\to \mathbb{U}^{\perp_{g_\varphi}}$) and immersion ($\iota:\mathbb{U}^{\perp_{g_\varphi}}\hookrightarrow \mathbb{K}_\mathbb{U}$), the definite triples $\underline{\omega}$ and $\underline{\tilde\omega}$. The conformal classes of each one of the four Riemannian metrics are independent of the choice of basis $U_1, U_2, U_3$ of $\mathbb{U}$. 

\begin{itemize}
 \item  If $\mathbb{U}$ is moreover non-assciative, then
    \begin{align*}
        g_{sub}
        &=\frac{\det A^{-1}A_{ij}}{b^2}\hat\alpha^i\hat\alpha^j+\frac{\det A^{-1}}{b^2}\beta^2
         = \frac{\det A^{-1}}{b^2}\left( A_{ij}\hat\alpha^i\hat\alpha^j+\beta^2\right),\\
        g_{imm}
        &=\frac{A_{ij}}{b^2c^2}\hat\alpha^i\hat\alpha^j+\frac{\det A^{-1}}{b^2}\beta^2
         = \frac{\det A^{-1}}{b^2}\left(\frac{1}{a^2} A_{ij}\hat\alpha^i\hat\alpha^j+\beta^2\right),\\
        g_{\underline{\omega}}
        &=\frac{\det A^{-1/3}A_{ij}}{b^2}\hat\alpha^i\hat\alpha^j+\frac{c^2\det A^{-4/3}}{b^2}\beta^2
         = \frac{\det A^{-\frac{1}{3}}}{b^2}\left(A_{ij}\hat\alpha^i\hat\alpha^j+a^2\beta^2\right),\\
        g_{\underline{\tilde\omega}}
        &=\frac{c\det A^{-2/3}A_{ij}}{b^2}\hat\alpha^i\hat\alpha^j+\frac{c^{-1}\det A^{1/3}}{b^2}\beta^2
        = \frac{\det A^{-\frac{1}{6}}}{b^2}\left(a A_{ij}\hat\alpha^i\hat\alpha^j + \frac{1}{a}\beta^2\right).
    \end{align*}
 While the metric $g_{sub}$ and $g_{imm}$ are independent of the particular choice of basis $U_1,U_2,U_3$ of $\mathbb{U}$, the metric $g_{\underline\omega}$ and $g_{\underline{\tilde\omega}}$ are depending the particular choice.

\item  If $\mathbb{U}$ is associative, then $g_{sub}=g_{imm}$, and the four conformal classes coincide.

\begin{figure}[h!]
\centering
    \begin{tikzpicture}
    \draw[thick] (-1,0) -- (3,0) (0,-1) -- (0,3);
    \draw (0,3) node[right=0.2em]{$\mathbb{U}$};
    \draw[thick, blue] (-1,0.04) -- (3,0.04);
    \draw (3,0) node[below=0.3em]{$\mathbb{U}^{\perp_{g_\varphi}}=\bigcap_i \ker\theta^i=\bigcap_{i}\ker\alpha^i$,};
    \draw (3,-0.5) node[below=0.3em]{$\underline\omega$ and  $\underline{\tilde\omega}$ are definite on it};
    \end{tikzpicture}
    \caption{Direct sum decomposition of $\mathbb{V}=\mathbb{U}\oplus \mathbb{K}_\mathbb{U}$ coincides with the orthogonal decomposition $\mathbb{V}=\mathbb{U}\oplus \left(\bigcap_i \ker\theta^i\right)$, in case $\mathbb{U}$ is associative.}
\end{figure}
 \end{itemize}
\end{remark}

We finish this section with a lemma relating the two definite triples obtained from $\varphi$ and $*\varphi$ in the case $\mathbb{U}$ is non-isotropic. 
\begin{lemma}
\label{lem:relation-between-two-hypersymplectic}
The two definite triples
$\underline{\omega}$ and $\underline{\tilde\omega}$ in Lemma \ref{lem:canonical-non-isotropic} satisfy
\begin{align*}
    \tilde\omega^i-cA^{ij}\omega_j+\beta\hat\alpha^i
    &=0,\\
    c\det A^{-1}A_{ij}\tilde\omega^j-\omega_i+\frac{1}{2}\varepsilon_{ijk}\hat\alpha^{jk}
    &=0.
\end{align*}

\end{lemma}

\section{Non-isotropic orbit}\label{sect:non-isotropic}


Let $(M, \varphi)$ be a $7$-manifold equipped with a closed $\rmG_2$-structure such that $\varphi$ is invariant under an effective $\T^3$-action. Let $M_0\subset M$ be the complement of singular orbits, i.e. the subset of points lying on $3$-dimensional orbits, and $M_0'\subset M_0$ be the subset of principal orbits. Let $\left\{U_1, U_2, U_3\right\}$ be a set of generating vector fields of the action, $c=\varphi\left(U_1,U_2,U_3\right)$ and $\mathbb{U}=\bigcup_{p\in M_0} \mathbb{U}_p$ where $\mathbb{U}_p=\Span\left\{U_1|_p, U_2|_p, U_3|_p\right\}$. Define $e_i:=B_i^jU_j$, then $g_\varphi(e_i,e_j)=\delta_{ij}$. Define 
\[
a:=\varphi(e_1,e_2,e_3)=c\det B,
\]
which is a smooth $\mathbb{T}^3$-invariant function on $M$ which takes values in $(0,1]$. Let $\theta^i$'s be the $1$-forms on $M_0$ such that
\[
\theta^i(U_j)=\delta^i_j,\;\;
\text{ and }\;\;
\theta^i(V)=0,\;\; \forall V\perp \mathbb{U}.
\]
These $1$-forms are $\T^3$-invariant, so they are the $1$-forms for the connection on the principal $\mathbb{T}^3$-bundle $\pi: M_0'\rightarrow X'$ determined by the $\T^3$-invariant distribution $\mathbb{U}^{\perp_{g_\varphi}}$. Then $\rmd\theta^i\in\Gamma\left(\Lambda^2T^*X'\right)$ ($i=1,2,3$) represent the curvature $2$-forms. Let $e^i:=e_i^\flat=(B^{-1})^i_j\theta^j$, then $g_\varphi(e^i,e^j)=\delta^{ij}$. We also denote \begin{align*} 
\alpha^i=\frac{1}{2}\varepsilon^{ijk}U_{jk}\lrcorner \varphi, \;\beta=U_{123}\lrcorner *\varphi
\end{align*}
to be the natural $1$-forms. The closedness and $\T^3$-invariance of $\varphi$ implies $c$ is a constant function on $M$, and $\alpha^1,\alpha^2,\alpha^3$ are closed $1$-forms on $M$. Define a distribution 
\begin{align}
\mathbb{K}=\bigcap_{i=1}^3 \ker \alpha^i
\end{align}
on $M_0$, it is integrable as $\alpha^i$'s are closed. 

The existence of one isotropic $\T^3$-orbit implies the constant $c=0$, and therefore all orbits are isotropic. In this section, we consider the case for which no isotropic orbit is present. In particular, $c\neq 0$ and any generating vector fields $U_1,U_2,U_3$ are nowhere linearly dependent and therefore there exists no singular orbit. Without loss of generality, assume $c>0$. Decompose $M=M_0=M'\cup M''$ where $M'=\{p\in M:a(p)\in(0,1)\}$ is the non-associative part which is a $\mathbb{T}^3$-invariant open subset, and $M''=\{p\in M:a(p)=1\}$ is the associative part which is a $\T^3$-invariant closed subset. We express $\varphi$ in these two cases using Lemma \ref{lem:canonical-non-isotropic}.\\

\subsection{Closed and Torsion-free conditions}~
\subsubsection{Closed condition}~
Lemma \ref{lem:canonical-non-isotropic} gives the canonical decomposition
\[
 \varphi
    =
    \frac{1}{c^2}\alpha^{123}-\frac{1}{c^2}\alpha^i\wedge\omega_i+\Omega,
\]
where $\omega_k =-cU_k\lrcorner\varphi+\frac12\varepsilon_{ijk}\alpha^{ij}$ is a horizontal $2$-form for $k=1,2,3$. Since $c$ is a constant function and $\alpha^i$'s are closed, the closedness and $\T^3$-invariance of $\varphi$ implies $\rmd\omega_k=0$ and therefore $\rmd\Omega=0$ on $M$. Together with the ``definiteness'' from Lemma \ref{lem:canonical-non-isotropic}, we derive that $\underline{\omega}=(\omega_1,\omega_2,\omega_3)$ descends to a hypersymplectic structure on the open manifold $M'/\T^3\subset M/\T^3$.\\

The key feature of non-associativity of a $3$-dimensional subspace, that the cross product of vectors generate all vectors, implies that the cross-products of $U_1, U_2, U_3$ generates $T_pM$ if $p$ lies on a non-singular orbit (cf. \cite[Lemma 2.6]{Madsen-Swann}). For such $p$, the isotropy group $H_p$ fixes any vector in $T_pM$ as it fixes $U_1,U_2, U_3$. As a consequence, $H_p=\left\{0\right\}$ and the orbit of $p$ is not exceptional, there $M'\subset M_0$.~

An associative $\T^3$-orbit might be exceptional, and do show up in some concrete examples. Take $p\in M$ such that the stabilizer group $H_p\neq \{0\}$, and let $\mathcal{O}$ be the orbit of $p$ and $T_p\mathcal{O}:=\mathbb{U}_p$. For any $h\in H_p\backslash\{0\}$, 
    \[
    \rmd h|_p: T_pM=\mathbb{U}_p\oplus \mathbb{U}_p^{\perp_{g_\varphi}} \rightarrow T_p M=\mathbb{U}_p\oplus \mathbb{U}_p^{\perp_{g_\varphi}}
    \]
    preserves each vector in $\mathbb{U}_p$, and therefore maps its orthogonal complement $\mathbb{U}_p^{\perp_{g_\varphi}}$ to $\mathbb{U}_p^{\perp_{g_\varphi}}$. As $h$ is a nontrivial isometry of $\left(M, g_\varphi\right)$, $\rmd h|_p\neq id$, which means $\rmd h|_p:\mathbb{U}_p^{\perp_{g_\varphi}}\rightarrow \mathbb{U}_p^{\perp_{g_\varphi}}$ is not identity. Suppose there is a sequence of exceptional orbits $\mathcal{O}_k$ converging to $\mathcal{O}$, then the \emph{Slice Theorem} for group action tells that the isotropy group of $\mathcal{O}_k$ is a subgroup of the finite group $H_p$ for $k$ sufficiently large. We can therefore take a subsequence $k_l$ such that the isotropy groups $\mathcal{O}_{k_l}$ remains constant and nontrivial.  Any nontrivial element $h$ in it fixes any points in $\mathcal{O}_{k_l}$ and this leads to the conclusion that $h$ will fix the geodesics connecting $p$ to these orbits and their initial directions, and therefore $\rmd h|_p$ fixes some nonzero vector in $\mathbb{U}_p^{\perp_{g_\varphi}}$. 
    As $\rmd h|_p$ preserves $\varphi|_p$, it preserves the definite triple $\left(\omega_1,\omega_2,\omega_3\right)|_p$ on $\mathbb{U}_p^{\perp_{g_\varphi}}$ canonically determined by $\varphi|_p$ (see Lemma \ref{lem:canonical-non-isotropic}
), i.e. $\rmd h|_p\in \SU\left(\mathbb{U}_p\right)$ (see also \cite[Lemma 3.6]{FHY}). This implies $\rmd h|_p=id$, a contradiction. As a consequence, the exceptional orbit must be isolated. Moreover, $H_p$ can be viewed as a subgroup of $\SU(2)$.\\ 

Since in the meantime $H_p$ is also a discrete subgroup of the Abelian group $\mathbb{T}^3$, $H_p$ must be cyclic, i.e. $A_n$-type subgroup of $\SU(2)$. Notice that not all discrete subgroup of $\T^3$ are cyclic, e.g. $\Z_{d_1}\oplus\Z_{d_2}\oplus\Z_{d_3}$. However, here $H_p$ is cyclic since $\langle\rmd h|_p\rangle$ is cyclic, from the fact that any finite Abelian group of $\SU(2)=\Sp(1)=S^3$ is cyclic. This implies $X:= M/\mathbb{T}^3$ is a priori an orbifold. Denote the quotient by $\pi:M\to X$.

Fix a point $p\in M$, take the integral leaf
\[
    \tilde X_p:=\left\{q\in M|\exists\gamma:[0,1]\to M\textrm{ smooth curve s.t. }\alpha_i(\dot\gamma)=0,\gamma(0)=p,\gamma(1)=q\right\}
\]
through $p$. For any $q\in \tilde X_p$ such that $H_q\neq \left\{0\right\}$, $\mathbb{K}_q=\mathbb{U}_q^{\perp_{g_\varphi}}$ is fixed by all element of $H_q$. As a consequence, $H_q$ fixes the whole leaf $\tilde X_p$, i.e. $H_q$ acts on $\tilde X_p$. Moreover, each orbit being non-isotropic implies $\mathbb{K}_q$ is transversal to $\mathbb{U}_q$ at any $q\in M$. Notice that the $\T^3$-invariance of the distribution $\mathbb{K}$ implies $\tilde X_{h\cdot p}=h\cdot \tilde X_p$, and therefore $M$ is foliated by a family of mutually diffeomorphic leaves. The underlying unique differentiable manifold could be simply denoted by $\boldsymbol X$, whose topological structure might be different from the induced topological structure from $M$ as $\tilde X_p$ might not be an closed submanifold of $M$.\\
 
 Take any one of these leaves, say $\tilde X_p$, denote the immersion $i_p:\boldsymbol X\rightarrow M$ with image $\tilde X_p$, and define 
    \[
    G:=\text{Stab}\left(\tilde X\right):=\left\{\underline t\in \mathbb{T}^3| \underline t\cdot q\in \tilde X_p \text{ for some }q\in \tilde X_p\right\}.
    \]
    Since $\tilde X_p\bigcap \pi^{-1}(x)$ consists of local integral manifold of the horizontal distributions, $G$ is a discrete subgroup of $\mathbb{T}^3$. Any $H_p$ is a subgroup of $G$. We claim $G$ acts on $\boldsymbol X$ and $X=\boldsymbol X/G$.\\

    \begin{proof}[Proof of the claim.]
     We take $\tilde X_p$ to represent $\boldsymbol X$. For any $\tilde x\in \tilde X_p$, $\tilde x$ can be connected to $p$ by a smooth horizontal curve $\gamma$. For any $\underline t\in G$, $\underline t\cdot q\in \tilde X_p$ for some $q\in \tilde X_p$ that can be connected to $p$ by some horizontal curve. It thus holds that $q$ can be connected to $\tilde x$ horizontally, and $\underline t\cdot \tilde x$ can be connected to $\underline t\cdot q$ and therefore to $p$ horizontally.

     Suppose $\tilde x, \tilde y\in \tilde X_p$ such that $\pi(\tilde x)=\pi(\tilde y)$, then $\tilde y=\underline t\cdot \tilde x$ for some $\underline t\in \T^3$. It follows from definition that $\underline t\in G$. Therefore, the preimage of any point under the map $\pi:\tilde X_p\rightarrow X$ is precisely a $G$-orbit. The transversality of $\mathbb{K}$ to $\mathbb{U}$ implies $\pi|_{\tilde X_p}$ is an open map. Suppose $x\in X\backslash\pi\left(\tilde X_p\right)\neq \emptyset$, and $\mathcal{U}$
    is a sufficiently small open $\T^3$-invariant tubular neighborhood of $\mathcal{O}_x$ such that it is foliated by disjoint union of $\left\{\underline t\cdot S|\underline t\in \T^3\right\}$ where $S$ is a local leaf transversal to $\mathcal{O}_x$. As long as $\tilde X_p$ intersects $\mathcal{U}$, it can be extended across a point on $\mathcal{O}_x$ by gluing a local leaf which is a contradiction. Thus, $\pi\left(\mathcal{U}\right)\subset X\backslash \pi\left(\tilde X_p\right)$, i.e. $\pi\left(\tilde X\right)$ is closed. In conclusion, $\tilde X_p\backslash G=X$. 
\end{proof}

The Slice Theorem then implies $\varpi:=\pi\circ i:\boldsymbol X\to X$ is an orbifold-covering. This can be seen as follows. On the principal part, $\pi|_{M'}:M'\to M'/\T^3=:X'$ is a $\T^3$-bundle map. Let $\boldsymbol X':=\varpi^{-1}\left(X'\right)$. Take $x\in X'$, then the submanifold $\pi^{-1}\left(x\right)\sub M$ is a principal orbit, and near $\pi^{-1}\left(x\right)$ the tubular neighborhood $\pi^{-1}\left(\mathfrak{W}\right)$ for some open neighborhood $\mathfrak{W}\sub X'$ is a trivial bundle since $\rmd\theta^i=0$. Note that $i\left(\boldsymbol X'\right)$ is locally the leaf of the distribution $\bigcap_i\ker\theta^i=\bigcap_i \ker\alpha^i=\mathbb{K}$, we have $\varpi^{-1}(\mathfrak{W})$ is a disjoint union of open sets diffeomorphic to $\mathfrak{W}\sub X'$. So $\varpi|_{\boldsymbol X'}:\boldsymbol X'\to X'$ is a covering map.\\

For $x\in X\backslash X'$, $\pi^{-1}\left(x\right)$ is an exceptional orbit. There is a tubular neighborhood $\pi^{-1}\left(\mathfrak{W}\right)$, for some $\mathfrak{W}\sub X$, equivariantly diffeomorphic to $\T^3\times_{H_p}\exp_p(O_p)$ where $p\in \pi^{-1}\left(x\right)$ and $O_p$ is an open neighborhood of $0\in \mathbb{U}_p^{\perp_{g_\varphi}}$. Similarly to the last paragraph, $\varpi^{-1}(\mathfrak{W})$ is a disjoint union of open sets diffeomorphic to $\exp_p(O_p)$. However, $\mathfrak{W}=\pi^{-1}\left(\mathfrak{W}\right)/\T^3=\left(\T^3\times_{H_p}\exp_p\left(O_p\right)\right)/\T^3=\exp_p\left(O_p\right)/H_p$. Therefore $\varpi:\boldsymbol X\to X$ is an orbifold-covering, and $X$ is a \emph{good orbifold}.\\

We claim that the natural free $G$-action on $\T^3\times \boldsymbol X$ via the formula $\underline g\cdot \left(\underline t, \mathbf{x}\right)=\left( \underline t\cdot \underline{g}^{-1}, \underline g\cdot \mathbf{x}\right)$ induces a smooth covering map $\boldsymbol\varpi:\T^3\times\boldsymbol X\to M$ sending $(h,\mathbf{x})$ to $h\cdot i(\mathbf{x})$. 

\begin{proof}[Proof of claim]
We can take $\boldsymbol X=\tilde X_p$. Assume $h_1\cdot \tilde x_1=h_2\cdot \tilde x_2$ for $\tilde x_1, \tilde x_2\in \tilde X_p$, then $\tilde x_2=\underline g\cdot \tilde x_1$ for some $\underline g\in G$, and therefore $h_1^{-1}h_2\underline g=h\in H_{\tilde x_1}$. As a consequence, 
\[
\left(h_2, \tilde x_2\right)
= 
\left(h_1h\underline g^{-1}, \underline g\cdot \tilde x_1\right) 
= 
\underline gh^{-1}\cdot \left(h_1, \tilde x_1\right). 
\]
This shows that the preimage of any point $q\in M$ is precisely a $G$-orbit. The surjectivity of $\pi: \tilde X_p\to X$ proved above implies the surjectivity of $\boldsymbol\varpi$. 
\end{proof}

The map $\boldsymbol \varpi$ is $\T^3$-equivariant, and  
\[
\pi\circ \boldsymbol\varpi=\varpi\circ \mathrm{pr}
\]
where $\mathrm{pr}: \mathbb{T}^3\times \boldsymbol X\to \boldsymbol X$ is the natural projection map. We can then use the map $\boldsymbol \varpi$ to pull back the closed $\rmG_2$-structure $\varphi$ to a closed $\rmG_2$-structure $\boldsymbol \varphi$ on $\mathbb{T}^3\times \boldsymbol X$. The canonically determined triple of $2$-forms $\underline\omega$ on $M$ is $G$-invariant and horizontal. Suppose $\T^3=S^1\times S^1\times S^1=\left\{\left(e^{it_1}, e^{it_2}, e^{it_3}\right)|t_1, t_2, t_3\in \mathbb{R}\right\}$, and using $U_i=\boldsymbol\varpi_*\frac{\partial}{\partial t^i}$ as the generating vector field for the $\T^3$-action on $M$. Let $\boldsymbol\alpha^i=\boldsymbol\varpi^*\alpha^i, \boldsymbol\omega_i=\boldsymbol\varpi^*\omega_i, \boldsymbol\Omega=\boldsymbol\varpi^*\Omega$, then
\begin{align*}
    \boldsymbol \alpha^i\left(\frac{\partial}{\partial t^j}\right)
    & = 
    \alpha^i\left(U_j\right) = c\delta^i_j, \\
    \boldsymbol \alpha^i\left(\boldsymbol W\right)
    & = 
    \alpha^i\left(\boldsymbol \varpi_*\boldsymbol W\right)=0, \forall \boldsymbol W\in T\left(\left\{\underline t\right\}\times \boldsymbol X\right), \\
    \frac{\partial}{\partial t^j}\lrcorner\boldsymbol\omega_i
    & =
    \boldsymbol\varpi^*\left( U_j\lrcorner\omega_i\right)=0, \\
    \frac{\partial}{\partial t^j}\lrcorner \boldsymbol \Omega
    &= 
    \boldsymbol\varpi^*\left(U_j\lrcorner \Omega\right)=0.
\end{align*}
This implies $\boldsymbol\alpha^i=c\rmd t^i$, $\boldsymbol\omega_i$ and $\boldsymbol\Omega$ are horizontal and
\begin{align*}
    \boldsymbol\varphi
    & = 
    c\rmd t^{123} - \frac{1}{c}\rmd t^i\wedge\boldsymbol\omega_i + \boldsymbol\Omega.
\end{align*}

By construction, $\boldsymbol{\underline\omega}$ and $\boldsymbol\Omega$ are $G$-invariant and descend (under the map $\varpi$) to an orbifold hypersymplectic structure, still denoted by $\underline\omega$, and an orbifold closed 3-form, still denoted by $\Omega$ on $X$.

\begin{thm}[Closed $\rmG_2$-structure with a non-isotropic $\T^3$-orbit]
\label{thm:nonisotropic-closed-structure}
    Let $\varphi$ be a closed $\rmG_2$-structure on the 7-manifold $M$. Suppose $\varphi$ is invariant under an effective $\mathbb{T}^3$-action, and there is a non-isotropic $\mathbb{T}^3$-orbit. Let $G=\text{Stab}\left(\tilde X\right)$ for any leaf $\tilde X\subset M$ of the integrable distribution $\mathbb{K}$. Then, 
    \begin{enumerate} 
    \item the action is almost-free, and $X:=M/\T^3$ is a good 4-orbifold with cyclic isolated singularities;
    \item there is a 4-manifold $\boldsymbol X$ which $G$ acts on smoothly, an orbifold-covering map $\varpi:\boldsymbol X\to X\simeq \boldsymbol X/G$, a $\T^3$-equivariant covering map $\boldsymbol\varpi:\T^3\times\boldsymbol X\to M\simeq \left(\T^3\times \boldsymbol X\right)/G$, such that the following diagram commutes
\[\begin{tikzcd}
	{\T^3\times\boldsymbol X} & M \\
	{\boldsymbol X} & X.
	\arrow["{\boldsymbol\varpi}", from=1-1, to=1-2]
	\arrow["{\mathrm{pr}}", from=1-1, to=2-1]
	\arrow["\pi", from=1-2, to=2-2]
	\arrow["\varpi", from=2-1, to=2-2]
\end{tikzcd}\]
\item there exists a hypersymplectic structure $\underline{\boldsymbol\omega}=\left(\boldsymbol\omega_1,\boldsymbol\omega_2,\boldsymbol\omega_3\right)$, a closed 3-form $\boldsymbol\Omega$ on $\boldsymbol X$, a constant $c\neq 0$, such that
\begin{align*} 
\boldsymbol\varpi^*\varphi
=\boldsymbol\varphi
=
c\rmd t^{123}-\frac{1}{c}\rmd t^i\wedge \boldsymbol\omega_i +\boldsymbol\Omega.
\end{align*}

\item  the structures $\underline{\boldsymbol\omega}, \boldsymbol\Omega$ are $G$-invariant, and thus descend to hypersymplectic structure $\underline\omega$ and closed 3-form $\Omega$ on the orbifold $X$. Moreover, the $\rmG_2$-structure can be written as
    \[
        \varphi=\frac{1}{c^2}\alpha^{123}-\frac{1}{c^2}\pi^*\omega_i\wedge\alpha^i+\pi^*\Omega,
    \]
    where $U_i=\boldsymbol\varpi_*\frac{\partial}{\partial t^i}$ and $\alpha^i:=\dfrac12\varepsilon^{ijk}U_{jk}\lrcorner\varphi$.
\end{enumerate}
\end{thm}




The next example shows that associative and non-associative orbits can co-exist for a $\T^3$-invariant closed $\rmG_2$-structure, though we do not have a concrete example for torsion-free case. 
\begin{example}[Co-existence of associative and non-associative orbits]
\label{ex:coexistence}
    Let $\T^3\times X$ be a $7$-dimensional manifold with a canonical $\T^3$-action preserving a closed $\rmG_2$-structure $\tilde\varphi$, such that the orbit is isotropic. Let $U_1,U_2,U_3$ be the generating field. Let $\tilde\theta^1,\tilde\theta^2,\tilde\theta^3$ be the canonical flat connections, which is closed $1$-form. Let $\tilde\alpha^i:=\frac12\varepsilon^{ijk}U_{jk}\lrcorner\tilde\varphi$ which are closed. Let $\tilde\beta:=U_{123}\lrcorner{*}\tilde\varphi$. Then $\tilde\theta^1,\tilde\theta^2,\tilde\theta^3,\tilde\alpha^1,\tilde\alpha^2,\tilde\alpha^3,\tilde\beta$ is a global coframe, and
    \[
        \tilde\varphi=-\det\tilde A^{-1}\tilde\alpha^{123}+\det\tilde A^{-1}\tilde A_{ij}\tilde\beta\tilde\alpha^i\tilde\theta^j+\frac{1}{2}\varepsilon_{ijk}\tilde\alpha^i\tilde\theta^{jk},
    \]
    where $\tilde A=(\tilde A_{ij})$ is a positive-definite symmetric matrix-valued function on $X$ such that
    \begin{align*}
        0&=\rmd(\det\tilde A^{-1})\tilde\alpha^{123},\\
        0&=\rmd(\det\tilde A^{-1}\tilde A_{ij}\tilde\beta)\tilde\alpha^i, \;\;\forall j.
    \end{align*}
    Such $\tilde A$ exists. For example, $X=\T^4$ with parameter $x^1,x^2,x^3,y$ and
    \[
        \tilde A:=
        \begin{bmatrix}
            f_1(x^1,y)\\
            &f_2(x^2,y)\\
            &&f_3(x^3,y)
        \end{bmatrix}
    \]
    where $f_i$ are positive non-trivial with a positive lower bound such that $\partial_y(f_1f_2f_3)=0$, and $\tilde\alpha^i:=\rmd x^i$ and $\tilde\beta:=\det\tilde A\rmd y$. (A possible $(f_1,f_2,f_3)$ is $f_i(x^i,y)=\sin x^i+C_i$ for constant $C_i>2$)
    
    Now we take a perturbation
    \[
        \varphi=-\det\tilde A^{-1}\tilde\alpha^{123}+\det\tilde A^{-1}\tilde A_{ij}\tilde\beta\tilde\alpha^i\tilde\theta^j+\frac{1}{2}\varepsilon_{ijk}\tilde\alpha^i\tilde\theta^{jk}+c\tilde\theta^{123},
    \]
    Let $U_i,\tilde X_i,\tilde Y$ be the dual basis of $\tilde\theta^i,\tilde\alpha^i,\tilde\beta$, then by calculation,
    \begin{align*}
        g_\varphi&=\left(1-\frac{c^2\det\tilde A^{-1}}{4}\right)^{-1/3}(\tilde A_{ij}\tilde\theta^i\tilde\theta^j+\frac{c}2\det\tilde A^{-1}\tilde A_{ij}\tilde\alpha^i\tilde\theta^j+\det\tilde A^{-1}\tilde  A_{ij}\tilde\alpha^i\tilde\alpha^j+\det\tilde A^{-1}\beta^2),\\
        \vol_{g_\varphi}&=\left(1-\frac{c^2\det\tilde A^{-1}}{4}\right)^{1/3}\det\tilde A^{-1}\tilde\alpha^{123}\tilde\beta\tilde\theta^{123}.
    \end{align*}
    To make sure $g_\varphi$ is a Riemannian metric, we need $\det\tilde A>c^2/4$.
    
    Under the new notation,
    \begin{align*}
        A_{ij}&:=g(U_i,U_j)=\left(1-\frac{c^2\det\tilde A^{-1}}{4}\right)^{-1/3}\tilde A_{ij},\\
        a&:=c\det A^{-1/2}=c\det\tilde A^{-1/2}\left(1-\frac{c^2\det\tilde A^{-1}}{4}\right)^{1/2},\\
        \theta^i&:=\tilde\theta^i-\frac{c}2\det\tilde A^{-1}\tilde\alpha^i,\\
        \alpha^i&:=\frac12\varepsilon^{ijk}U_{jk}\lrcorner\varphi=\tilde\alpha^i+c\tilde\theta^i,\\
        \hat\alpha^i&:=\alpha^i-c\theta^i=\left(1-\frac{c^2\det\tilde A^{-1}}{2}\right)\tilde\alpha^i,\\
        \omega_i&:=-cU_i\lrcorner\varphi+\frac12\varepsilon_{ijk}\alpha^{jk}=-c\det\tilde A^{-1}\tilde A_{ij}\tilde\beta\tilde\alpha^j+\frac12\varepsilon_{ijk}\tilde\alpha^{jk}.
    \end{align*}
    So an orbit is associative iff
    \[
        c\det\tilde A^{-1/2}=\sqrt{2}.
    \]
    Also,
    \[
        \rmd\theta^i=-\frac{c}2\rmd(\det\tilde A^{-1})\wedge\tilde\alpha^i.
    \]
    So the perturbation is a non-trivial example.
\end{example}~

\subsubsection{Torsion-free condition}~
Besides $\underline\omega$ singled out from $\varphi$, Lemma \ref{lem:canonical-non-isotropic} also gives the definite triple of $2$-forms 
\begin{align*}
     &\tilde\omega^k
    =-\frac12c\varepsilon^{ijk}U_{ij}\lrcorner{*}\varphi+\alpha^k\beta
\end{align*}
from $*\varphi$, which are now closed since $\alpha^k$'s, $\beta$ and $*\varphi$ are closed. This gives an extra hypersymplectic structure on the $4$-orbifold $X$ and its orbifold-covering $\boldsymbol X$.

\begin{thm}[Torsion-free $\rmG_2$-structure with a non-isotropic $\T^3$-orbit]
\label{thm:nonisotropic-torsionfree-structure}
    Let $\varphi$ be a torsion-free $\rmG_2$-structure on the 7-manifold $M$. Suppose $\varphi$ is invariant under an effective $\mathbb{T}^3$-action, and there is a non-isotropic $\mathbb{T}^3$-orbit. Then besides the conclusions in Theorem \ref{thm:nonisotropic-closed-structure}, we have 
    \begin{align*}
      *\varphi&=-\frac{1}{c^3}\beta\alpha^{123}-\frac{1}{2c^3}\varepsilon_{ijk}\pi^*\tilde\omega^i\wedge\alpha^{jk}-\frac{1}{c^3}\pi^*\omega_i\wedge\pi^*\beta\wedge\alpha^i+\pi^*\bar\Omega,
    \end{align*}
where $\tilde{\underline{\omega}}=\left(\tilde\omega^1,\tilde\omega^2,\tilde\omega^3\right)$ is another hypersymplectic structure on the good 4-orbifold $X$, $\beta$ is a closed 1-form and $\bar\Omega$ is a 4-form on $X$. Moreover, 
\begin{align*}
\boldsymbol\varpi^*\left(*\varphi\right)
=*\boldsymbol\varphi
& =-\frac{1}{c^3}\boldsymbol\beta\boldsymbol\alpha^{123}-\frac{1}{2c^3}\varepsilon_{ijk}\tilde{\boldsymbol\omega}^i\wedge\boldsymbol\alpha^{jk}-\frac{1}{c^3}\boldsymbol\omega_i\wedge\boldsymbol\beta\boldsymbol\alpha^i+\bar{\boldsymbol\Omega},\\
& =
-\boldsymbol\beta\wedge\rmd t^{123}-\frac{1}{2c}\varepsilon_{ijk}\tilde{\boldsymbol\omega}^i\wedge\rmd t^{jk}-\frac{1}{c^2}\boldsymbol\omega_i\wedge\boldsymbol\beta\wedge \rmd t^i+\bar{\boldsymbol\Omega},
\end{align*}
where $\underline{\tilde{\boldsymbol\omega}}=\varpi^*\underline{\tilde\omega}$ is the pull back hypersymplectic structure on $\boldsymbol X$, $\boldsymbol\beta=\varpi^*\beta$ and $\bar{\boldsymbol\Omega}=\varpi^*\bar\Omega$.
\end{thm}

\begin{thm}[Generalized Gibbons-Hawking Ansatz for non-isotropic and non-associative]
\label{thm:Gibbons-Hawking}
    The non-associative part is uniquely determined by the global coframe $\left\{\hat\alpha^1,\hat\alpha^2,\hat\alpha^3,\beta\right\}$ with metric
    \begin{align*}
        g=\frac{\det A^{-1}A_{ij}}{b^2}\hat\alpha^i\hat\alpha^j+\frac{\det A^{-1}}{b^2}\beta^2
    \end{align*}
    on a 4-manifold $\mathcal{X}$, where $A=(A_{ij})$ is a positive-definite $3\times 3$ matrix-valued function such that 
    \begin{enumerate}
        \item $\partial_iA^{ij}=0$ for $j=1,2,3$;
        \item $\rmd\beta=0$ and $\rmd\hat\alpha^k=z^k_i\beta\hat\alpha^i+\varepsilon_{ijk}w^{kl}\hat\alpha^{ij}$ (for $k=1,2,3$), where
    \begin{align*}
        2w^{kl}&=\frac{a^2}{b^2}A^{kp}\partial_qA_{rp}\varepsilon^{qrl}-\frac{c\partial_yA^{kl}}{b^2},\;\; k,l=1,2,3,\\
        z_k^p&=\frac{\partial_q\left(c\det A^{-1}A_{ik}\right)}{b^2}\varepsilon^{qip}-\frac{a^2A_{ik}\partial_yA^{ip}}{b^2},\;\; k,p=1,2,3,
    \end{align*}
    and $\left\{\partial_1,\partial_2,\partial_3,\partial_y\right\}$ is the frame dual to the coframe $\left\{\hat\alpha^1,\hat\alpha^2,\hat\alpha^3,\beta\right\}$.
    \end{enumerate}
\end{thm}
\begin{proof}
    Write\footnote{We should also emphasize that due to the usage of $\alpha^i$ and $\varepsilon_{ijk}$ etc, the $c^{-1}z_i^k$ and $-2c^{-1}w^{kl}$ here corresponds to $z_k^i$ and $w_k^l$ of Madsen-Swann \cite[Section 3.1]{Madsen-Swann}.} $\rmd\hat\alpha^k=:z^k_i\beta\hat\alpha^i+\varepsilon_{ijk}w^{kl}\hat\alpha^{ij}$. The closedness of $\Omega$, whose formula is given in Equation \eqref{Omega-in-nonisotropic-nonassociative}
, implies
    \begin{align*}
        0=c^2\rmd\Omega=\left(\partial_y\left(\frac1{b^2}\right)+\frac{1}{b^2}z_i^i\right)\beta\hat\alpha^{123}.
    \end{align*}
    Then
    \begin{align*}
        \partial_y\left(\frac1{b^2}\right)+\frac{1}{b^2}z_i^i=0.
    \end{align*}
    
    The closedness of $\underline{\omega}$, whose formula is given in Equation \eqref{hypersymplectic-in-nonisotropic-nonassociative}, implies
    \begin{align*}
       0={}\rmd\omega_k={}&\left(\partial_k\left(\frac1{b^2}\right)+\frac{2}{b^2}\varepsilon_{ijk}w^{ij}\right)\hat\alpha^{123}\\
        &+\left(-\frac{1}{2b^2}z_k^i+\frac{c\det A^{-1}}{b^2}A_{pk}w^{pi}+\partial_q\left(\frac{c\det A^{-1}A_{rk}}{2b^2}\right)\varepsilon^{qri}\right)\varepsilon_{ist}\hat\alpha^{st}\beta, \;\; \forall k.
    \end{align*}
    Then the coefficients respectively give
    \begin{align*}
        0&=\partial_k\left(\frac1{b^2}\right)+\frac{2}{b^2}\varepsilon_{ijk}w^{ij},\;\; \forall k, \\
        0&=-\frac{1}{b^2}z_k^i+\frac{2c\det A^{-1}}{b^2}A_{pk}w^{pi}+\partial_q\left(\frac{c\det A^{-1}A_{rk}}{b^2}\right)\varepsilon^{qri}, \;\; \forall k, i.
    \end{align*}
    
    The closedness of $\underline{\tilde\omega}$, whose formula is given in Equation \eqref{dual-hypersymplectic-in-nonisotropic-nonassociative} implies
    \begin{align*}
        0=\rmd\tilde\omega^i={}&\frac{1}{b^2}\partial_pA^{pi}\hat\alpha^{123}\\
        &+\left(-\frac{c}{2b^2}z_p^m+\frac1{b^2}A_{pq}w^{qm}+\partial_k\left(\frac1{2b^2}\right)A_{pq}\varepsilon^{kqm}+\frac{cA_{pq}\partial_yA^{mq}}{2b^2}\right)A^{pi}\varepsilon_{mst}\hat\alpha^{st}\beta, \;\; \forall i.
    \end{align*}
    Then the coefficients respectively give
    \begin{align*}
        0={}&\partial_pA^{pi},\;\;\forall i,\\
        0={}&-\frac{c}{b^2}z_p^m+\frac2{b^2}A_{pq}w^{qm}+\partial_k\left(\frac1{b^2}\right)A_{pq}\varepsilon^{kqm}+\frac{cA_{pq}\partial_yA^{mq}}{b^2}, \;\;\forall p,m.
    \end{align*}
    
    Overall, the torsion-free condition is equivalent to
    \begin{align}
        \label{eq-t2-1}\partial_y\left(\frac1{b^2}\right)+\frac{1}{b^2}z_i^i
        & =0,\\
        \label{eq-t2-2}\partial_k\left(\frac1{b^2}\right)+\frac{2}{b^2}\varepsilon_{ijk}w^{ij}
        & =0,\;\; \forall k,\\
        \label{eq-t2-3} -\frac{1}{b^2}z_k^p+\frac{2c\det A^{-1}}{b^2}A_{ik}w^{ip}+\partial_q\left(\frac{c\det A^{-1}A_{ik}}{b^2}\right)\varepsilon^{qip}
        & =0,\;\; \forall k,p,\\
        \label{eq-t2-4}\partial_j A^{ij}
        & =0,\;\; \forall i,\\
        \label{eq-t2-5}-\frac{c}{b^2}z_k^p+\frac2{b^2}A_{ik}w^{ip}+\partial_q\left(\frac1{b^2}\right)A_{ik}\varepsilon^{qip}+\frac{cA_{ik}\partial_yA^{ip}}{b^2}
        & =0, \;\; \forall k,p.
    \end{align}
    
    For any $k,p$, viewing \eqref{eq-t2-3}, \eqref{eq-t2-5} as a system of linear equations of $z_k^p,A_{ik}w^{ip}$, we could derive
    \begin{align*}
        2w^{ip}&=\frac{a^2}{b^2}A^{ij}\partial_qA_{rj}\varepsilon^{qrp}-\frac{c\partial_yA^{ip}}{b^2},\\
        z_k^p&=\frac{\partial_q(c\det A^{-1}A_{ik})}{b^2}\varepsilon^{qip}-\frac{a^2A_{ik}\partial_yA^{ip}}{b^2}.
    \end{align*}
    
    Also, the equations \eqref{eq-t2-1}, \eqref{eq-t2-2} automatically hold considering the divergence free condition \eqref{eq-t2-4} is satisfied.
\end{proof}

\begin{remark}This theorem is a partial generalization of \emph{Madsen-Swann Ansatz} for toric $\rmG_2$-manifold \cite[Theorem 3.5]{Madsen-Swann}. Actually, $\rmd\rmd\hat\alpha^k=0$ (for $k=1,2,3$) also give a second-order PDE system of $A$. However, the Lie brackets of the derivatives depends on $A$. So it is more complicated, and we leave it for future investigation. 
\end{remark}

\subsection{Two Liouville type theorems}
We are going to prove two Liouville type theorems about complete torsion-free $\rmG_2$-structures invariant under an effective $\T^3$-action, under two extra geometric assumptions about the orbits. Notice that whether an orbit is associative is determined by whether $a=1$, and $a$ is related to the orbit volume through $a=c\det A^{-1}$; hence assuming the orbit volume to be constant forces $a$ to be a constant, which naturally splits the discussion into the non-associative case ($a<1$) and the associative case ($a=1$); a mixture of the two types cannot occur.

\subsubsection{With orbits being non-isotropic non-associative and of constant volume}

\begin{thm}[Complete Torsion-free $\rmG_2$-structure with a non-isotropic non-associative $\T^3$-orbit and constant orbit volume]
\label{thm:complete-torsion-free-constant-volume-non-associative}
    Let $\varphi$ be a complete torsion-free $\rmG_2$-structure on the $7$-manifold $M$. Suppose $\varphi$ is invariant under an effective $\mathbb{T}^3$-action, and there is a non-isotropic non-associative $\mathbb{T}^3$-orbit, and the volume of orbits is constant $1$. Then the action is free, and $\left(M,\varphi\right)$ is flat. Moreover, the $\rmG_2$-structure is
    \begin{align*}
      \varphi=-\frac1{1-a^2}\hat\alpha^{123}+\frac{1}{1-a^2}\beta\hat\alpha^i\theta^i+\frac12{\varepsilon}_{ijk}\hat\alpha^i\theta^{jk}-\frac{a}{2\left(1-a^2\right)}{\varepsilon}_{ijk}\hat\alpha^{ij}\theta^k+a\theta^{123}.
    \end{align*}
    where $a\in(0,1)$ is constant, $\theta^1,\theta^2,\theta^3$ are the connection 1-forms, and $\hat\alpha^1,\hat\alpha^2,\hat\alpha^3,\beta$ is a closed global coframe on the manifold $M/\T^3$.
\end{thm}
\begin{proof}
    Using Theorem \ref{thm:nonisotropic-torsionfree-structure}, we can assume $M=\T^3\times X$ where $X$ is simply-connected, and the action is the canonical $\T^3$-action on the first factor. By the assumptions, $|U_1\wedge U_2\wedge U_3|_{g_\varphi}=\det A^{\frac{1}{2}}=\det B^{-1}$ is constant on the nonempty open set of non-associative orbits. The formula $a=\frac{c}{\sqrt{\det A}}$ implies $a$ is a constant in $(-1,1)$ on this open set, and must continue to be this constant on the boundary of this open set. This implies the orbits on the boundary continue to be non-associative, and thus all orbits are non-associative. Moreover, $a$ is constant on $M$. The existence of a non-isotropic orbit implies all orbits are non-isotropic, and therefore the action is free.  Using \eqref{data-in-alpha-hat-alpha}, \eqref{metric-in-alpha-hat-alpha} we can write
    \begin{align*}
        \varphi&=\frac{1}{c^2}\alpha^{123}-\frac{1}{2b^2c^2}\varepsilon_{ijk}\hat\alpha^{ij}\alpha^k+\frac{\det A^{-1}A_{ij}}{b^2c}\beta\hat\alpha^i\alpha^j+\frac{2}{b^2c^2}\hat\alpha^{123},\\
        g_\varphi&=\frac{A_{ij}}{c^2}\alpha^i\alpha^j-\frac{2A_{ij}}{c^2}\alpha^i\hat\alpha^j+\frac{A_{ij}}{b^2c^2}\hat\alpha^i\hat\alpha^j+\frac{\det A^{-1}}{b^2}\beta^2,
    \end{align*}
    where $a,b,\det A,\det B$ are all constants, and $\left\{\alpha^1,\alpha^2,\alpha^3,\hat\alpha^1,\hat\alpha^2,\hat\alpha^3,\beta\right\}$ is a global coframe on $M$. Let $\left\{X_1,X_2,X_3,Y\right\}$ be the frame dual to the coframe $\left\{\hat\alpha^1,\hat\alpha^2,\hat\alpha^3,\beta\right\}$ on the 4-manifold $M/\T^3$. Define $V=\frac{b}{\det B} Y$ and take $\gamma$ to be an integral curve of $V$ on $M$, then we have $\abs{V}\equiv 1$ and $\nabla_V V=0$. So $\gamma$ is a geodesic. Since $M$ is complete, the definition of $\gamma$ can be extended to $\R$. Also, since $\frac{\det B}{b}\beta$ is closed and $X$ is simply-connected, we have $\frac{\det B}{b}\beta=\rmd y$ for some function $y:X\to\R$. Also, $\rmd(y\circ\pi)=\frac{\det B}{b}\beta$, and $\partial_t\left(y\circ\pi\circ \gamma\right)=\rmd y\left(\dot \gamma\right)=1$. So $\gamma$ is injective, i.e. not a loop.
    
    Take any $t_1<t_2$, and assume a curve $\gamma':[t_1,t_2]\to M$ connecting $\gamma(t_1)$ and $\gamma(t_2)$ minimizes their distance. Note that $y:(M,g_\varphi)\to(\R,\rmd y^2)$ is a Riemannian submersion,
    \begin{align*}
        L(\gamma')\geqslant L(y\circ \gamma')\geqslant L(y\circ \gamma|_{[t_1,t_2]})=L(\gamma|_{[t_1,t_2]}).
    \end{align*}
    So $\gamma|_{[t_1,t_2]}$ itself minimizes the distance. Therefore $\gamma$ is a geodesic line.
    
    By Cheeger-Gromoll's Splitting Theorem in Riemann geometry and the fact that $\Ric\; g_\varphi=0$ due to the torsion-freeness of $\varphi$, $(M,g_\varphi)=(\R,\beta^2)\times(N,\bar g)$, where $N$ is a complete 6-manifold. Actually, $N$ can be viewed as one level set of $y$, and flow generated by the vector field $V$ induces the isometric splitting. 
    
    Since $\T^3$ preserves $\beta$ and $\Iso(\R,\beta^2)=\R\rtimes\Z_2$, the $\T^3$-action is trivial on $(\R,\beta^2)$. Then the $\T^3$-action is effective on $(N,g_N)$. So $A_{ij}=g(U_i,U_j)$ is independent of $y$, i.e. $\partial_yA_{ij}=0$. Also, on $M/\T^3=\R\times N/\T^3$, $A_{ij}\hat\alpha^i\hat\alpha^j$ is independent of $y$. That is,
    \begin{align*}
        0=L_Y\bar g&=\partial_yA_{ij}\hat\alpha^i\otimes\hat\alpha^j+A_{ij}L_Y\hat\alpha^i\otimes\hat\alpha^j+A_{ij}\hat\alpha^i\otimes L_Y\hat\alpha^j\\
        &=A_{ij}z^i_k\hat\alpha^k\otimes\hat\alpha^j+A_{ij}z^j_k\hat\alpha^i\otimes\hat\alpha^k\\
        &=\left(A_{pj}z^p_i+A_{pi}z^p_j\right)\hat\alpha^i\otimes\hat\alpha^j
    \end{align*}
    Then $\forall i, j,$
    \begin{align*}
        0&=A_{pj}z^p_i+A_{pi}z^p_j\\
        &=A_{kj}\left(\frac{a^2\partial_pA_{qi}}{b^2c}\varepsilon^{pqk}-\frac{a^2A_{qi}\partial_yA^{qk}}{b^2}\right)+A_{ki}\left(\frac{a^2\partial_pA_{qj}}{b^2c}\varepsilon^{pqk}-\frac{a^2A_{qj}\partial_yA^{qk}}{b^2}\right)\\
        &=A_{kj}\frac{a^2\partial_pA_{qi}}{b^2c}\varepsilon^{pqk}+A_{ki}\frac{a^2\partial_pA_{qj}}{b^2c}\varepsilon^{pqk}\\
        &=^*\frac{2a^2}{b^2c}A_{kj}\partial_pA_{qi}\varepsilon^{pqk}
    \end{align*}
    where $*$ is from the fact that $\forall l$,
    \begin{align*}
   A_{ki}\partial_pA_{qj}\varepsilon^{pqk}\varepsilon^{ijl}&=\det A\cdot\partial_pA_{qj}\left(A^{pj}A^{ql}-A^{pl}A^{qj}\right)\\
        &=-\det A\cdot\partial_pA^{pl}-\det A\cdot A^{pl}\partial_p\log\det A\\
        &=0,
    \end{align*}
    where in the last equality we use the divergence free condition $\partial_p A^{pl}=0$ and the condition that $\det A$ is constant. Finally, item (2) of Theorem \ref{thm:Gibbons-Hawking} implies $z^i_j=0$ and $w^{kl}=0$, and then $0=\rmd\hat\alpha^k$, i.e. the connection $\underline\theta$ on the $\T^3$-bundle is flat.\\
    
    The hypersymplectic structures $\underline\omega$ and $\underline{\tilde\omega}$ on $X$ obtained in Theorem \ref{thm:nonisotropic-torsionfree-structure} satisfy the relation 
    \begin{align}
    \tilde\omega^i-cA^{ij}\omega_j+\beta\hat\alpha^i
    &=0
    \end{align}
    by Lemma \ref{lem:relation-between-two-hypersymplectic}. The flatness of the connection $\underline\theta=\left(\theta^1,\theta^2,\theta^2\right)$ implies $\rmd \hat\alpha^i=-c\rmd\theta^i=0$, and therefore 
    \begin{align*}
        \rmd\left(A^{ij}\omega_j\right)=0. 
    \end{align*}
    On the other hand, since the $Q$-matrix (recall $Q_{ij}=\frac{1}{2}\langle \omega_i, \omega_j\rangle_{g_{\underline\omega}}$) for the hypersymplectic structure $\underline\omega$ is $Q_{ij}=\frac{A_{ij}}{\det A^\frac{1}{3}}$ by item (3) of Lemma \ref{lem:canonical-non-isotropic} and $\det A$ is constant, we have 
    \begin{align*}
        \rmd\left(Q^{ij}\omega_j\right)=0
    \end{align*}
    which precisely means that $\underline\omega$ is torsion-free.
    
    Note that the Riemannian product $M=\R\times N$ gives
    \begin{align*}
        (X,g_{sub})=(\R,\rmd y^2)\times (N/\T^3,\frac{\det A^{-1}A_{ij}}{b^2}\hat\alpha^i\hat\alpha^j),
    \end{align*}
    we have that
    \begin{align*}
        g_{\underline{\omega}}=\det A^{2/3}a^2\rmd y^2+\det A^{2/3}\cdot\frac{\det A^{-1}A_{ij}}{b^2}\hat\alpha^i\hat\alpha^j.
    \end{align*}
    Since $\det A^{2/3},\det A^{2/3}a^2$ are both constant, $g_{\underline{\omega}}$ is a complete metric. We can use \cite[Theorem 25]{Fine-Yao2}, which states that ``\emph{any torsion-free hypersymplectic structure which defines a complete Riemannian metric is hyperk\"ahler}'', to conclude that $\left(\boldsymbol X,\underline{\boldsymbol\omega}\right)$ is hyperk\"ahler, i.e. $A$ is a constant matrix. Therefore $M$ is flat.
\end{proof}

\subsubsection{With all orbits being associative}\label{subsect:associative}\,
In this subsection we make the special assumption that all $\T^3$-orbits are associative (which in particular implies the existence of a non-isotropic orbit), equivalently $\Omega=0$ in the decomposition of $\varphi$ in Theorem \ref{thm:nonisotropic-closed-structure}. 

To fix the scale, let $\T^3=S^1\times S^1\times S^1=\left\{\left(e^{it_1}, e^{it_2}, e^{it_3}\right)|t_1, t_2, t_3\in \mathbb{R}\right\}$, and using $U_i=\frac{\partial}{\partial t^i}$ as the generating vector field for the $\T^3$-action on $M$. Then $c=\varphi\left(U_1,U_2, U_3\right)=\varphi\left(\frac{\partial}{\partial t^1},\frac{\partial}{\partial t^2},\frac{\partial}{\partial t^3}\right)$ and $\alpha^i=\frac{1}{2}\varepsilon^{ijk}U_{jk}\lrcorner \varphi=c\rmd t^i$.
Using Theorem \ref{thm:nonisotropic-torsionfree-structure}, we can consider the pull-back complete torsion-free $\rmG_2$-structure 
\begin{align*}
    \boldsymbol \varphi
    = 
    c\rmd t^{123}-\frac{1}{c}\rmd t^i\wedge \boldsymbol\omega_i
\end{align*}
on $\T^3\times \boldsymbol X$ which equivariantly covers $M$ by the map $\boldsymbol\varpi$, and $\varpi:\boldsymbol X\rightarrow X$ is an orbifold-covering. The volume of any fiber $\mathbb{T}^3$ under the metric $g_{\boldsymbol \varphi}$ is $\left(2\pi\right)^3c$. Up to a global rescaling, we can simply assume $c=1$, and arrive at precisely the $\rmG_2$-structure considered in \cite{Fine-Yao}. Since $pr: \T^3\times \boldsymbol X\rightarrow \boldsymbol X$ is a Riemannian submersion from $g_{\boldsymbol \varphi}=A_{ij}\rmd t^i\rmd t^j\oplus g_{\underline{\boldsymbol \omega}}$ to $g_{\underline{\boldsymbol\omega}}$, the completeness of $g_{\boldsymbol \varphi}$ implies the completeness of $g_{\underline{\boldsymbol\omega}}$. The torsion-freeness of $\boldsymbol \varphi$ implies the torsion-freeness of $\underline{\boldsymbol\omega}$, and therefore we can again use \cite[Theorem 25]{Fine-Yao2} to conclude $\left(\boldsymbol X,\underline{\boldsymbol\omega}\right)$ is hyperk\"ahler, i.e. $A$ is a constant matrix.~

\begin{thm}[Complete Torsion-free $\rmG_2$-structure with all $\T^3$-orbits associative]
\label{thm:associative-hyperkahler}
    Let $\varphi$ be a complete torsion-free $\rmG_2$-structure on the $7$-manifold $M$. Suppose $\varphi$ is invariant under an effective $\mathbb{T}^3$-action with all orbits being associative. Then the action is almost-free, and there exists a complete hyperk\"ahler $4$-manifold $\left(\boldsymbol X, \underline{\boldsymbol \omega}\right)$ and a $\T^3$-equivariant covering $\boldsymbol \varpi:\T^3\times \boldsymbol X\rightarrow M$, such that 
    \begin{align*}
        \boldsymbol\varpi^*\varphi
        = 
        c\rmd t^{123}
        - 
        \frac{1}{c}\rmd t^1\wedge \boldsymbol\omega_1
        - 
        \frac{1}{c}\rmd t^2\wedge \boldsymbol\omega_2 
        - 
        \frac{1}{c}\rmd t^3\wedge \boldsymbol\omega_3
    \end{align*}
    where $\left(2\pi\right)^3c=\text{Vol}_\varphi(\mathcal{O})$ is the volume of any principal $\T^3$-orbit\footnote{The volume of an exceptional orbit $\mathcal{O}_p$ is $\frac{\left(2\pi\right)^3c}{|H_p|}$.}. Moreover, $X=M/\T^3$ is a good orbifold with cyclic isolated singularities equipped with an orbifold hyperk\"ahler structure $\underline\omega=\left(\omega_1,\omega_2,\omega_3\right)$, and there exists an orbifold-covering $\varpi:\boldsymbol X\rightarrow X$ such that $\varpi^*\omega_i=\boldsymbol\omega_i$ for $i=1,2,3$.
\end{thm}


\begin{example}
        Consider $M=\mathbb{R}^4\times \left(\left[0,2\pi\right]\times \mathbb{T}^2\right)/\sim$ where ``$\sim$'' refers to the equivalence relation
    \[
    \left(\mathbf x, \left(0,t^2,t^3\right)\right)
    \sim 
    \left(-\mathbf x, \left(2\pi, \theta^2,\theta^3\right)\right), \; \; \mathbf x\in \mathbb{R}^4, \; \left(t^2,t^3\right)\in \mathbb{T}^2.
    \]
    Consider 
    \[
    \varphi
    =
    \rmd t^{123}-\rmd t^1\wedge\omega_1-\rmd t^2\wedge\omega_2-\rmd t^3\wedge\omega_3
    \]
    where $\left(\omega_1,\omega_2,\omega_3\right)$ is the standard hyperk\"ahler triple on $\mathbb{R}^4$. The group $\mathbb{T}^3=\left([0,4\pi]/{\sim}\right)\times\T^2$ acts as standard \emph{translations} in the $\mathbb{T}^3$-factor, preserving $\varphi$: 
    \begin{itemize}
      \item The orbit $\left\{\mathbf 0\right\}\times \mathbb{T}^3$ is exceptional;
      \item The orbit of $\left(\mathbf{x},\underline t\right)$ for $\mathbf x\neq \mathbf 0$ is principal.
    \end{itemize}
The orbit space $\left(\mathbb{R}^4/\mathbb{Z}_2, \underline\omega\right)$ is the standard flat hyperk\"ahler orbifold.
\end{example}

    \begin{remark}
        It should be noticed that $i\left(\tilde X\right)$ is a locally embedded coassociative submanifold of $\left(M,\varphi\right)$ by the mechanism of Lemma 2.13 of \cite{Kari-Lotay}, where hypersymplectic geometry also plays an crucial rule. The difference is that in our current situation, the hypersymplectic metric on the coassociative submanifold coincides with the induced Riemannian metric $g_\varphi$ on $i\left(\tilde X\right)$ while it is not the case for all the examples considered in \cite{Kari-Lotay} (see Remark 2.16 therein).
    \end{remark}

\section{Isotropic orbit}
\label{sect:isotropic}
The terminology ``isotropic orbit'' corresponds to the condition $\varphi(U_1,U_2,U_3)=0$ for any choice of basis of generating vector fields for the $\mathbb{T}^3$-action, and this is independent of the choice of basis of Lie algebra of $\mathbb{T}^3$. As mentioned above, it is proved in \cite[Lemma 2.6]{Madsen-Swann}), there exists no exceptional orbits even though there might exist singular orbits.

Concrete examples include the flat models 
\begin{itemize}
    \item $M=\mathbb{T}^2\times \mathbb{R}\times \mathbb{C}^2$ 
    with coordinate $x,y,u,z,w$, and 
    \[
    \varphi
    = 
    \rmd u\wedge\rmd x\wedge\rmd y 
    - 
    \rmd u\wedge\frac{i}{2}\left(\rmd z\wedge\rmd \bar z+ \rmd w\wedge\rmd \bar w\right) 
    - 
    \text{Re}\left( \left(\rmd x - i\rmd y\right)\wedge \rmd z\wedge \rmd w\right), 
    \]
    \item $M=S^1\times \mathbb{C}^3$ 
    with coordinate $x,z_1, z_2, z_3$, and
    \[
    \varphi= 
    \frac{i}{2}\rmd x \wedge \left(\rmd z_1\wedge\rmd \bar z_1 + \rmd z_2\wedge \rmd \bar z_2 + \rmd z_3\wedge \rmd \bar z_3\right)
    + 
    \text{Re}\left(\rmd z_1\wedge\rmd z_2\wedge \rmd z_3\right),
    \]
\end{itemize}
and non-flat models $S^1\times T^*S^3$ coming from the Stenzel Calabi-Yau structure \cite{St, Madsen-Swann} and Bryant-Salamon structures on the Spinor bundle over $S^3$ \cite{BS}. Many more examples, including infinite families, can be constructed via the methods of \cite{FHN} and \cite{DMS}.

\subsection{Hypersymplectic foliation}\label{sect:hypersymplectic-foliation}\,
According to Lemma \ref{lem:canonical-decomposition-isotropic}, on the open sets $M_0$ of principal orbits we have the decomposition
\begin{align}
\label{phi-isotropic-deomposition}
    \varphi
    & = 
    -\det A^{-1}\alpha^{123} + \alpha^i\wedge\omega_i, \\
    *\varphi
    & = 
    -\beta\theta^{123} + \frac{1}{2}\varepsilon_{ijk}\alpha^{ij}\wedge\tilde\omega^k, \nonumber
\end{align}
where the triple of $2$-forms
\begin{align}
    \omega^i 
    & = \frac{1}{2}\varepsilon_{ijk}\theta^{jk} - \left(\det A^{-1}\right)A_{ip}\beta\theta^p, \\
    \tilde\omega^i
    & = 
    -\frac{1}{2}\varepsilon_{pqr}A^{ip}\theta^{qr}
    + 
    \left(\det A^{-1}\right) \beta\theta^i,
\end{align}
where the $1$-forms $\alpha^i:=\frac12\varepsilon^{ijk}U_{jk}\lrcorner\varphi$ and $\beta:=U_{123}\lrcorner{*\varphi}$, the $1$-forms $\theta^i$'s for the connection $\mathbb{U}^{\perp_{g_\varphi}}=\bigcap_i \ker\theta^i$ on the principal $\mathbb{T}^3$-bundles $\pi: M_0\to X_0$ , are introduced in the beginning to section \ref{sect:non-isotropic}. We could not conclude $\rmd \theta^i\equiv 0$ and we could not conclude $\det B$ to be constant neither in general. Actually, in all the examples of toric $\rmG_2$-manifold considered in \cite{Madsen-Swann}, even the flat models, the connections $\theta^i$'s are not flat, singular orbits do appear and $\det B$ blows up near the singular orbits.\\ 

Since $\alpha_1, \alpha_2, \alpha_3$ are closed $1$-forms and are linearly independent everywhere on $M_0$, the distribution $\mathbb{K}=\bigcap_i\ker\alpha^i$ determines a regular foliation whose leaves are $4$-dimensional submanifolds of $M_0$. The distribution is independent of the choice of generating vector fields $U_1,U_2,U_3$. Let $Y_0$ be the \emph{space of leaves}. Since the 1-forms $\alpha_i$'s restricts to $0$ on each $\mathbb{T}^3$-orbit, the $\mathbb{T}^3$-action restricts to free action on $N_y$ for each $y\in Y_0$. We will see in the following that $N_y$ is inheriting some special geometries from the background $\rmG_2$-structure and the $\mathbb{T}^3$-action.\\

The induced Riemannian metric on $M_0$ is 
\begin{align*}
    g_\varphi&=A_{ij}\theta^i\theta^j+\det A^{-1}A_{ij}\alpha^i\alpha^j
    +A^{-1}\beta^2.
\end{align*}

Let $\left\{\mathfrak{n}_1, \mathfrak{n}_2, \mathfrak{n_3}, \mathfrak{m}, U_1, U_2, U_3\right\}$ be the frame dual to the coframe $\left\{\alpha^1, \alpha^2,\alpha^3, \beta, \theta^1, \theta^2, \theta^3\right\}$ on $M_0$, then 
\begin{align*}
\widehat\omega_i^y
:=\omega^i|_{N_y}
= \left(\mathfrak{n}_i\lrcorner\varphi\right)|_{N_y}
= \frac{1}{2}\varepsilon_{ijk}\widehat\theta^{jk}
- 
\left(\det A^{-1}\right) A_{ip}\widehat\beta\widehat\theta^p, \;\; i=1,2,3
\end{align*}
satisfies 
\begin{equation}
\widehat\omega_i^y\wedge\widehat\omega_j^y
=
2\det A^{-1}A_{ij} \widehat\theta^{123}\widehat\beta
\end{equation}
where $\widehat\theta^i=\theta^i|_{N_y}, \widehat\beta=\beta|_{N_y}$. Since the curvature $2$-forms $\rmd\theta^i$'s are linear combination of $\alpha^{pq}$ and $\beta\alpha^r$ on $M_0$, we have 
\[
\rmd\widehat\theta^i\equiv 0
\]
on $N_y$. The $\mathbb{T}^3$ invariance of $\beta=\left(*\varphi\right)\left(U_1, U_2, U_3, \cdot\right)$ implies $\mathcal{L}_{U_i}\beta=0$, and thus 
\begin{align*}
    &\rmd \beta \left(\mathfrak{m}, U_i\right)
    = 
    \mathfrak{m}\left(\beta\left(U_i\right)\right)-U_i\left(\beta\left(\mathfrak{m}\right)\right) + \beta\left(\mathcal{L}_{U_i}\mathfrak{m}\right)
    =
    - \left(\mathcal{L}_{U_i}\beta\right)\left(\mathfrak{m}\right)
    =
    0.\\
    &\rmd \beta \left(U_i, U_j\right) 
    = 
    U_i\left(\beta\left(U_i\right)\right) 
    - 
    U_j\left(\beta\left(U_i\right)\right) 
    - 
    \beta\left( \left[U_i,U_j\right]\right) 
    =
    0.
\end{align*}
It follows that $\rmd \beta$ is also a linear combination of $\alpha^{pq}$ and $\beta\alpha^r$ as $\rmd \theta^i$'s above. The term $\rmd \left(\det A^{-1}A_{ip}\right)$ is a linear combination of $\alpha^i$'s and $\beta$, implying that only the components involving $\alpha^i$'s survive in the computations of $\rmd \omega^i$'s\footnote{As a contrast, the components involving $\beta$ in the computation of $\rmd \tilde\omega^i$ also survives.}.\\

Therefore, $\widehat\omega_i^y$ is closed $2$-form on $N_y$ for $i=1,2,3$ and 
$\underline{\widehat\omega}^y=\left(\widehat\omega_1^y,\widehat\omega_2^y, \widehat\omega_3^y\right)$ is a hypersymplectic structure on $N_y$. The metric $\left(N_y, \underline\omega^y\right)$ may not coincide with $g_\varphi|_{N_y}$ as remarked in \cite[Remark 2.16]{Kari-Lotay}. It follows from the formula \eqref{phi-isotropic-deomposition} that 
\begin{align}
\varphi|_{N_y}
& = 0, \;\; \\
*\varphi|_{N_y}
& = \text{dvol}_{N_y}, 
\end{align}
meaning that $N_y$ is ``formally calibrated'' by $*\varphi$. In the special case that $*\varphi$ is closed, $N_y$ is indeed a co-associative submanifold as noted by Madsen-Swann(see the paragraph immediately after \cite[Proposition 3.2]{Madsen-Swann}). This also follows from the general discussion of coassociative fibrations of Karigiannis-Lotay (see \cite[Corollary 2.15]{Kari-Lotay}). We should notice that $N_y$ is not co-associative in case $*\varphi$ is not closed.\\

If we choose another basis of generating vector fields of the action $U_i'=K_i^{\phantom{i}j}U_j$, then 
\[
{\alpha'^i} = \left(\det K\right)\left(K^{-1}\right)_p^{\phantom{p}i}\alpha^p, \;\;
\beta'  = \left(\det K\right)\beta, \;\;
\theta'^i= \left(K^{-1}\right)_l^{\phantom{l}i}\theta^l
\]
and 
\[
\mathfrak{n}_i' = \left(\det K\right)^{-1}K_i^{\phantom{i}p}\mathfrak{n}_p,\;\;
\mathfrak{m}' = \left(\det K\right)^{-1}\mathfrak{m}, \;\;
U_i' = K_i^{\phantom{i}p}U_p.
\]
As a consequence, 
\[
\widehat\omega_i'^{y}
= 
\left(\mathfrak{n}_i'\lrcorner \varphi\right)|_{N_y}
= 
\left(\det K\right)^{-1}K_i^{\phantom{i}p}\widehat\omega_p^y, \;\; 
g_{\underline{\widehat\omega}'}= g_{\underline{\widehat\omega}},
\]
i.e. the hypersymplectic geometry inherited on $N_y$ is independent of the particular choice of generating vector fields. 

\subsection{Trivalent graph}\label{sect:trivalent-graph}\,

If $\varphi$ is only assumed to be closed, the multi-moment maps $\nu_1,\nu_2,\nu_3$ for $\varphi$ exist locally. More precisely, we have 
\begin{lemma}
\label{lemma:local-multimoment}
    For any orbit $\mathcal{O}$, there exists a $\mathbb{T}^3$-invariant open neighborhood $\mathcal{U}$ of $\mathcal{O}$, and smooth $\mathbb{T}^3$-invariant functions $\nu_1, \nu_2, \nu_3$ defined on $\mathcal{U}$ such that 
    \[
    \rmd\nu_i=\frac12\varepsilon_i^{\phantom{i}jk}U_j\wedge U_k\lrcorner\varphi, \;\; i=1,2,3.
    \]   
    The functions $\nu_i$'s satisfying that $\nu_i|_\mathcal{O}=0$ are uniquely determined by the basis $U_1, U_2, U_3$.
\end{lemma}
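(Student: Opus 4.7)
The plan is to realize each $\nu_i$ as a primitive of the closed invariant $1$-form $\alpha_i := \frac{1}{2}\varepsilon_i^{\phantom{i}jk} U_j \wedge U_k \lrcorner \varphi$ on a $\mathbb{T}^3$-invariant tubular neighborhood of $\mathcal{O}$, by an equivariant Poincar\'e-type argument. First, since $\varphi$ is closed and $\mathbb{T}^3$-invariant, two successive applications of the earlier lemma (``$U\lrcorner\alpha$ is closed and invariant whenever $\alpha$ is'') show that $\alpha_i$ is closed and $\mathbb{T}^3$-invariant. Moreover, the isotropic hypothesis $\varphi(U_1,U_2,U_3)\equiv 0$ forces
\[
\alpha_i(U_l)|_p = \tfrac{1}{2}\varepsilon_i^{\phantom{i}jk}\varphi(U_j,U_k,U_l)|_p = \pm\varphi(U_1,U_2,U_3)|_p = 0
\]
for every $p\in M$ and every index $l$. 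Since the tangent space $T_p\mathcal{O}$ is spanned by the $U_l|_p$'s (on principal, exceptional, and singular orbits alike), the pullback of $\alpha_i$ to $\mathcal{O}$ vanishes identically.

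Next, I would apply the Slice Theorem to obtain a connected $\mathbb{T}^3$-invariant open neighborhood $\mathcal{U}$ of $\mathcal{O}$ equivariantly diffeomorphic to $\mathbb{T}^3\times_{H_p} V$, where $V$ is the slice representation at $p\in\mathcal{O}$. Linear scaling on $V$ commutes with the $H_p$-action and thus descends to a smooth $\mathbb{T}^3$-equivariant deformation retraction $F:\mathcal{U}\times[0,1]\to\mathcal{U}$ from $F_0=\mathrm{id}$ to $F_1=r$, the projection onto $\mathcal{O}$. Let $K$ denote the associated chain homotopy satisfying $dK+Kd=\mathrm{id}-r^*$. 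Applied to the closed $1$-form $\alpha_i$, using $r^*\alpha_i = r^*(\alpha_i|_\mathcal{O}) = 0$, one obtains $d(K\alpha_i)=\alpha_i$, so we set $\nu_i:=K\alpha_i$. The equivariance of $F$ together with the invariance of $\alpha_i$ makes $\nu_i$ automatically $\mathbb{T}^3$-invariant, while the fact that $F$ is stationary along $\mathcal{O}$ ensures $\nu_i|_\mathcal{O}=0$ directly from the integral defining $K$.

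Uniqueness subject to the normalization $\nu_i|_\mathcal{O}=0$ is immediate: any two invariant smooth functions with common differential $\alpha_i$ differ by a locally constant function on the connected neighborhood $\mathcal{U}$, and the boundary condition on $\mathcal{O}$ pins this constant down to zero.

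I expect no serious obstacle; the only mildly delicate point is verifying that the deformation retraction can be chosen $\mathbb{T}^3$-equivariantly, which follows at once from the Slice Theorem and from the linearity of scaling on the slice. The conceptual heart of the argument is the observation that the isotropic condition forces $\alpha_i|_\mathcal{O}=0$, which is precisely what guarantees exactness of $\alpha_i$ on the tubular neighborhood despite $H^1(\mathcal{U})\cong H^1(\mathcal{O})$ being nontrivial in general.
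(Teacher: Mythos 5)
Your proof is correct, and it reaches the same destination as the paper by a noticeably different mechanism. The paper also starts from the Slice Theorem picture, but it integrates $\alpha_i$ only on a single normal slice $\exp_p\mathbb{B}_p$ (a contractible ball, so closedness alone gives a primitive there), observes that the normalized primitive is automatically $H_p$-invariant by uniqueness, and then propagates it around $\mathcal{O}$ by the $\mathbb{T}^3$-translations; the fact that the resulting function is a potential in the orbit directions rests on $\alpha_i(U_l)=0$, which the paper leaves implicit. You instead work globally on the whole tube $\mathcal{U}\cong\mathbb{T}^3\times_{H_p}V$, using the equivariant radial retraction onto $\mathcal{O}$ and the associated homotopy operator, so that invariance and the normalization $\nu_i|_{\mathcal{O}}=0$ come for free from equivariance and stationarity of the homotopy. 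The real merit of your version is that it isolates the role of the isotropic hypothesis: since $H^1(\mathcal{U})\cong H^1(\mathcal{O})$ is nontrivial, closedness of $\alpha_i$ does not suffice, and it is precisely $\varphi(U_1,U_2,U_3)=0$, forcing $\alpha_i|_{\mathcal{O}}=0$ and hence $r^*\alpha_i=0$, that makes $\alpha_i$ exact on the tube. (Only a sign convention in $\rmd K+K\rmd=\pm(\mathrm{id}-r^*)$ needs care; it does not affect the argument.) The paper's slice-by-slice route buys a slightly more hands-on description of $\nu_i$ on each normal disc, which it then reuses in the singularity analysis, while yours is the more economical and conceptually transparent derivation of the lemma itself.
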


\begin{proof}
    Take any $p\in \mathcal{O}$, and metric ball $\mathbb{B}_p$ (centered at $0$) in $\left(T_p \mathcal{O}\right)^\perp$ such that $\exp_p: \mathbb{B}_p\to M$ is injective. Translating $\mathbb{B}_p$ using elements of $\mathbb{T}^3$, we get a $\mathbb{T}^3$-equivariant diffeomorphism 
    \[
    \exp_\mathcal{O}^\perp: \mathbb{B}\left( \subset N\mathcal{O}\right)\rightarrow M
    \]
    where $\mathbb{B}$ is the disc bundle (of fixed radius with respect to $g_\varphi$) in the normal bundle of $\mathcal{O}$.

    The form $\alpha_i=U_{jk}\lrcorner \varphi$ restricts to closed $1$-from on the submanifold $\exp_p\mathbb{B}_p$, and therefore it is the differential of a function $\nu_i$ defined on this submanifold, and it is uniquely determined if we further require $\nu_i(p)=0$. The isotropy group $H_p\subset \mathbb{T}^3$ acts on $\exp_p\mathbb{B}_p$ in a way which preserves the restriction of the $\alpha_i$, and therefore $\nu_i$ is $H_p$-invariant. By translating $\exp_p \mathbb{B}_p$ to $\exp_q\mathbb{B}_q$ for any $q\in \mathbb{T}^3\cdot p=\mathcal{O}$, we obtain naturally the function $\nu_i$ defined on $\mathcal{U}=\exp_\mathcal{O}^\perp\left(\mathbb{B}\right)$ that is the potential for $\alpha_i$.
\end{proof}

Since $\beta:=U_1\wedge U_2\wedge U_3\lrcorner{*}\varphi$ may not be closed, there may not exist multi-moment map for $*\varphi$,  Nevertheless, most of singularity analysis in \cite[Section 4.2]{Madsen-Swann} still holds. Different from the torsion-free case, certain higher order covariant derivatives of $\nu_1, \nu_2, \nu_3$ and $\beta$, which contain terms of the form $(\nabla^a\varphi)(\nabla^bU_i,\nabla^cU_j,\cdot)_p$, may not agree with the flat model.

Explicitly, for $p\in M$ with $H_p\simeq\T^2$, take $U_1,U_2,U_3$ corresponding to the flat model defined in \cite[Section 4.1]{Madsen-Swann}, then
\begin{align*}
    &(\nabla\nu_1)_p
     =(\nabla\nu_2)_p=(\nabla\nu_3)_p=\beta_p=0,\\
    & (\nabla^2\nu_2)_p
     =\varphi(\nabla U_3,U_1,\cdot)_p\neq0,\;\;
    (\nabla^2\nu_3)_p
     =\varphi(U_1,\nabla U_2,\cdot)_p\neq0,\;\;
    (\nabla^2\nu_1)_p
     =(\nabla\beta)_p=0,\\
    & (\nabla^3\nu_1)_p
     =2\varphi(\nabla U_2,\nabla U_3,\cdot)_p\neq0,\;\;
    (\nabla^2\beta)_p
     =2\left({*}\varphi\right)(U_1,\nabla U_2,\nabla U_3,\cdot)_p
    \neq0.
\end{align*}
These agree with the flat model $S^1\times \mathbb{C}^3$ as in \cite[Section 4.2.2]{Madsen-Swann}, while 
\begin{align*}
    \left(\nabla^3 \nu_2\right)_p
    = 
    2\left(\nabla\varphi\right)\left(\nabla U_3, U_1, \cdot\right)_p,\;\;
    \left(\nabla^3 \nu_3\right)_p
    = 
    2\left(\nabla\varphi\right)\left(U_1, \nabla U_2,\cdot\right)_p
\end{align*}
may not agree with the flat model. This means, at a point $p$ with stabilizer $\mathbb{T}^2$, $\nu_2$ and $\nu_3$ agree with the flat model to order $2$, $\nu_1$ agrees with the flat model to order $3$ while $\beta$ agrees the flat model to order $2$. 

For $p\in M$ with $H_p\simeq S^1$, take $U_1,U_2,U_3$ corresponding to the flat model, then
\begin{align*}
    &(\nabla \nu_1)_p=(\nabla \nu_2)_p=\beta_p=0,\;\;
    (\nabla \nu_3)_p=\varphi(U_1,U_2,\cdot)_p\neq0,\\
    &(\nabla^2\nu_1)_p=\varphi(U_2,\nabla U_3,\cdot)_p\neq0,\;\;
    (\nabla^2\nu_2)_p=\varphi(\nabla U_3,U_1,\cdot)_p\neq0,\\
    &(\nabla\beta)_p=\left(*\varphi\right)(U_1,U_2,\nabla U_3,\cdot)_p\neq0.
\end{align*}
They agree with the flat model $\mathbb{T}^2\times \mathbb{R}\times \mathbb{C}^2$ as in \cite[Section 4.2.3]{Madsen-Swann}, while 
\begin{align*}
    &\left(\nabla^2 \nu_3\right)_p 
    = 
    \left(\nabla\varphi\right)\left(U_1,U_2, \cdot\right)_p 
    + 
    \varphi\left(\nabla U_1, U_2, \cdot\right)_p 
    + 
    \varphi\left(U_1, \nabla U_2, \cdot\right)_p, \\
    &\left(\nabla^2\beta\right)_p 
    = 
    \left(\nabla \left(*\varphi\right)\right)\left(U_1, U_2, \nabla U_3, \cdot\right)_p 
    + 
    2\left(*\varphi\right)\left(\nabla U_1, U_2, \nabla U_3, \cdot\right)_p 
    + 
    2\left(*\varphi\right)\left(U_1, \nabla U_2, \nabla U_3, \cdot\right)_p
\end{align*}
may not agree with the flat model. This means, at a point $p$ with stabilizer $S^1$, $\nu_1$ and $\nu_2$ agree with the flat model to order $2$, while $\nu_3$ and $\beta$ agree with the flat model to order $1$. 

The above singularity analysis shows similarly that
\begin{prop}
    For any orbit $\mathcal{O}$, there exists a $\mathbb{T}^3$-invariant open neighborhood $\mathcal{U}$ of $\mathcal{O}$ and basis $U_1,U_2,U_3$ of the generating vector fields, such that under the map 
    \[
    {\boldsymbol{\nu}}=\left(\nu_1,\nu_2,\nu_3\right): \mathcal{U}\rightarrow \mathbb{D}\subset \mathbb{R}^3,
    \]
    the image of singular orbits 
    \begin{itemize}
        \item corresponding to one-dim stabilizers is $\mathbb{D}\bigcap\left\{\nu_1=\nu_2=0\right\}$;
        \item corresponding to two-dim stabilizers is 
        $\left\{(0,\nu_2,0)\in \mathbb{D}|\nu_2>0\right\}\cup \left\{(0,0,\nu_3)\in \mathbb{D}|\nu_3<0\right\}\cup \left\{(0,-\nu,\nu)\in \mathbb{D}|\nu>0\right\}\cup \left\{\mathbf{0}\right\}$.
    \end{itemize}
    \end{prop}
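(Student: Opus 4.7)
The plan is to combine the local multi-moment map from Lemma \ref{lemma:local-multimoment} with a slice-theorem argument around $\mathcal{O}$, reducing the problem to the flat models studied in \cite[Section 4]{Madsen-Swann}. First, I would fix $p\in\mathcal{O}$ and take $\mathcal{U}=\exp_{\mathcal{O}}^\perp(\mathbb{B})$ as in the proof of Lemma \ref{lemma:local-multimoment}, and then choose the basis $U_1, U_2, U_3$ of the $\mathbb{T}^3$-generators so that its infinitesimal action on the normal slice $(T_p\mathcal{O})^\perp$ matches the standard form of whichever flat model corresponds to $H_p$. The multi-moment maps $\nu_i$, normalized so that $\nu_i|_\mathcal{O}=0$, are then uniquely determined.

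When $H_p\simeq S^1$ the flat model is $\mathbb{T}^2\times\mathbb{R}\times\mathbb{C}^2$, whose singular stratum is $\mathbb{T}^2\times\mathbb{R}\times\{0\}$. After a $\GL(3,\mathbb{Z})$-change of basis we may take $U_1, U_2$ to be generators of the $\mathbb{T}^2$-factor and $U_3$ the $\mathbb{R}$-generator; the singularity analysis just above the proposition then gives $\nu_1=\nu_2=0$ identically along the singular locus while $\rmd\nu_3\neq 0$ transverse to it, so the image of the singular stratum is precisely $\mathbb{D}\cap\{\nu_1=\nu_2=0\}$. When $H_p\simeq\mathbb{T}^2$ the flat model is $S^1\times\mathbb{C}^3$, in which three inequivalent $\mathbb{T}^2$-subgroups of $\mathbb{T}^3$ arise as isotropies of points on the three complex coordinate axes. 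With $U_1, U_2, U_3$ chosen as in \cite[Section 4.2.2]{Madsen-Swann}, each axis produces a one-parameter family of $S^1$-isotropy orbits whose image under $\boldsymbol{\nu}$ is determined, to lowest nonzero order, by the weights of the $\mathbb{T}^3$-action on the coordinates of $\mathbb{C}^3$; a direct weight computation yields the three rays $\{(0,\nu_2,0):\nu_2>0\}$, $\{(0,0,\nu_3):\nu_3<0\}$ and $\{(0,-\nu,\nu):\nu>0\}$, while the central orbit $\mathcal{O}$ itself maps to $\mathbf{0}$ by construction.

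The main obstacle is that, in contrast to the torsion-free setting of \cite{Madsen-Swann}, here $\varphi$ is only closed, so the higher-order Taylor data of $\nu_i$ and $\beta$ at a singular orbit need not match the flat model, and the equivariant slice-theorem identification is not a $\varphi$-isomorphism. However, the image of each stratum under $\boldsymbol{\nu}$ depends only on the vanishing of $\nu_i$ and its lowest-order derivatives along the singular locus, which is controlled by the identity $\rmd\nu_i=\tfrac12\varepsilon_i{}^{jk}U_j\wedge U_k\lrcorner\varphi$ together with the pointwise vanishing of $U_j\wedge U_k$ whenever $U_j$ or $U_k$ lies in $\mathfrak{h}_p$. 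These are precisely the facts recorded in the singularity analysis preceding the proposition, so the same images as in the torsion-free case are obtained, and a further subtlety—ensuring that a single choice of basis $U_1, U_2, U_3$ simultaneously resolves all three $\mathbb{T}^2$-subgroups in the $H_p\simeq\mathbb{T}^2$ case—is handled by the explicit adapted basis of \cite[Section 4.1]{Madsen-Swann}.
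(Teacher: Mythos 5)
Your proposal is correct and follows essentially the same route as the paper, which proves this proposition by combining the local multi-moment maps of Lemma \ref{lemma:local-multimoment} with the preceding order-of-vanishing/singularity analysis at points with stabilizer $S^1$ or $\mathbb{T}^2$, compared against the flat models of \cite[Section 4]{Madsen-Swann}; your observation that only the lowest-order jets (which do agree with the flat model even though $\varphi$ is merely closed) determine the images is exactly the point the paper is making. One small slip: the isotropy groups of points on the three coordinate axes of $\mathbb{C}^3$ in the flat model $S^1\times\mathbb{C}^3$ are circle subgroups, not $\mathbb{T}^2$-subgroups (only the central orbit has stabilizer $\mathbb{T}^2$), as your own next clause correctly states.
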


\subsection{Another Liouville type theorem}
If we try to extend the hypersymplectic foliation structure to the whole of $M$, we will run into the new phenomenon of certain singular hypersymplectic foliation and singular hypersymplectic structures. It is interesting to see how the structure of the space of leaves and those hypersymplectic structures incoorporate to a $\mathbb{T}^3$-invariant closed $\rmG_2$-structure with isotropic orbits. Nevertheless, under the same geometric assumption of constant volume of orbits, we can obtain the following Liouville type theorem.

\begin{thm}[Complete Torsion-free $\rmG_2$-structure with an isotropic $\T^3$-orbit and constant orbit volume]
\label{thm:complete-isotropic-const-volume}
    Let $\varphi$ be a complete torsion-free $\rmG_2$-structure on the $7$-manifold $M$. Suppose $\varphi$ is invariant under an effective $\mathbb{T}^3$-action, and there is an isotropic $\mathbb{T}^3$-orbit, and the volume of orbits is constant. Then the action is free, and $\left(M,\varphi\right)$ is flat. Moreover, the $\rmG_2$-structure is
    \begin{align*}
      \varphi=\beta\alpha^i\theta^i+\frac12{\varepsilon}_{ijk}\alpha^i\theta^{jk}-\alpha^{123}.
    \end{align*}
    where $\theta^1,\theta^2,\theta^3$ are the connection 1-forms, and $\alpha^1,\alpha^2,\alpha^3,\beta$ is a closed global coframe on the manifold $M/\T^3$.
\end{thm}
\begin{proof}
    Since the volume of orbits is constant, we have $\det A$ is constant, where $A_{ij}:=g_\varphi(U_i,U_j)$. So there is no singular orbits, we can assume the structures can be rewritten as
    \begin{align*}
        \varphi=\det A^{-1}A_{ij}\beta\alpha^i\theta^j+\frac12\varepsilon_{ijk}\alpha^i\theta^{jk}-\det A^{-1}\alpha^{123},\\
        g_\varphi=A_{ij}\theta^i\theta^j+\det A^{-1}A_{ij}\alpha^i\alpha^j+\det A^{-1}\beta^2.
    \end{align*}
    Using Theorem \ref{thm:nonisotropic-torsionfree-structure}, we can assume $M=\T^3\times X$ where $X$ is simply-connected, and the action is the canonical $\T^3$-action on the first factor. Since $\alpha^1,\alpha^2,\alpha^3,\beta$ are closed, there are functions $\nu^1,\nu^2,\nu^3,\mu$ such that $\alpha^i=\rmd\nu^i,\beta=\rmd\mu$. By \cite{Madsen-Swann}, the torsion-free condition is equivalent to
    \begin{align}
        \partial_iA^{ij}
        & =0,\;\;\forall j,\\
     \partial_\mu^2A^{ij}+A^{pq}\partial_p\partial_qA^{ij}-\partial_pA^{iq}\partial_qA^{jp}
        &=0,\;\;\forall i,j,
    \end{align}
    where $\partial_\mu=\partial/\partial\mu,\partial_p=\partial/\partial\nu^p$. Then
    \begin{align*}
        0&=A_{ij}\left(\partial_\mu^2A^{ij}+A^{pq}\partial_p\partial_qA^{ij}-\partial_pA^{iq}\partial_qA^{jp}\right)\\
        &=\partial_\mu(A_{ij}\partial_\mu A^{ij})-\partial_\mu A_{ij}\partial_\mu A^{ij}+A^{pq}\partial_p(A_{ij}\partial_qA^{ij})-A^{pq}\partial_pA_{ij}\partial_qA^{ij}-A_{ij}\partial_pA^{iq}\partial_qA^{jp}\\
        &=A^{ik}A^{jl}\partial_\mu A_{ij}\partial_\mu A_{kl}+A^{pq}A^{ik}A^{jl}\partial_pA_{ij}\partial_qA_{kl}-A^{pk}A^{iq}A^{jl}\partial_pA_{ij}\partial_qA_{kl}\\
        &=\abs{\partial_\mu A}_A^2+\frac12\abs{T_A}_{g_A}^2
    \end{align*}
    where $T_A=(\partial_pA_{ij}-\partial_iA_{pj})\rmd \nu^p\otimes \rmd \nu^i\otimes \rmd \nu^j$ is a $(0,3)$-tensor on $M/\T^3$, and $|\cdot|_A$ measures its norm with respect to the metric $g_A:=\rmd\mu^2+A_{ij}\rmd \nu^i\otimes \nu^j$, and $|P|_A^2=A^{ik}P_{ij}A^{jl}P_{kl}$ for symmetric $3\times 3$-matrix $P$ measures the square norm in the symmetric space $\Sym_+\left(3\times 3\right)$ (as in \cite{Fine-Yao}).
    So we have
    \begin{align*}
        \partial_\mu A_{ij}
        & =0,\;\;\forall i,j,\\
       \partial_pA_{ij}-\partial_iA_{pj}
       & =0, \;\;  \forall i,j,p.
    \end{align*}
    Now define
    \begin{align*}
        \hat\omega_i:=\det A^{-1/2}A_{ij}\alpha^j\beta+\frac12\varepsilon_{ijk}\alpha^{jk},
    \end{align*}
    then
    \begin{align*}
        \hat\omega_i\wedge\hat\omega_j
         =2\det A^{-1/2}A_{ij}\alpha^{123}\beta, & \;\; \forall i,j,\\
        \rmd\hat\omega_i
         =\det A^{-1/2}\partial_pA_{ij}\alpha^{pj}\beta
        =0, &\;\; \forall i,\\
        \rmd(A^{ij}\hat\omega_i)=\frac{1}{2}\varepsilon_{ipq}\partial_rA^{ij}\alpha^{rpq}=\partial_rA^{rj}\alpha^{123}
         =0, &\;\; \forall j.
    \end{align*}
    So $\underline{\hat\omega}$ is a torsion-free hypersymplectic structure on $M/\T^3$. Moreover, its induced metric is a multiple of $g_{sub}$ and therefore complete. Applying \cite[Theorem 25]{Fine-Yao2}, we have that $(X,\underline{\hat\omega})$ is a hyperk\"ahler 4-manifold and $A$ is constant. So $M$ is flat. Moreover, under suitable basis $\left\{U_1, U_2, U_3\right\}$, $A\equiv id$ and we obtain the form for $\varphi$ claimed in the theorem.
\end{proof}

\noindent {\bfseries{Acknowledgement}}: We would like to thank Yohsuke Imagi for very helpful discussions. This work was supported by the \emph{National Key R\&D Program of China [2025YFA1018200 to C.Y.]}. The article is part of the master thesis of Z.-Y. Zhou.\\

\bibliographystyle{amsplain}
\bibliography{ref}{}

\end{document}